\algnewcommand\algorithmicinput{\textbf{Input:}}
\algnewcommand\algorithmicoutput{\textbf{Output:}}
\algnewcommand\Input{\item[\algorithmicinput]}
\algnewcommand\Output{\item[\algorithmicoutput]}
\tikzstyle{vertex}=[circle, draw, inner sep=0pt, minimum size=6pt]
\tikzset{->-/.style={decoration={
  markings,
  mark=at position .5 with {\arrow{>}}},postaction={decorate}}}
\newcommand{\m}[1]{}
\declaretheorem[parent=section,thmbox=M]{theorem}
\declaretheorem[numberlike=theorem,thmbox=M]{proposition}
\declaretheorem[numberlike=theorem,thmbox=M]{conjecture}
\declaretheorem[numberlike=theorem]{lemma}
\declaretheorem[numberlike=theorem]{claim}
\newenvironment{subproof}{\par\noindent {\it Proof}.\ }{\hfill$\blacklozenge$ \par\vspace{11pt}}
\newcommand{\Gya}{Gy\'arf\'as\xspace}
\newcommand{\Chu}{Chudnovsky\xspace}
\newcommand{\sm}{\setminus}
\newcommand{\mc}{\mathcal}
\newcommand{\mC}{\mathcal{C}}
\newcommand{\ra}{\rightarrow}
\newcommand{\ora}[1]{\vv
{#1}}
\newcommand{\ob}[1]{\overbar{#1}}
\DeclareMathOperator{\dic}{\ora \chi}
\newcommand{\dP}[1]{\ora{P}_{\hspace{-0.1em}#1}}
\newcommand{\dC}[1]{\ora{C}_{\hspace{-0.1em}#1}}
\newcommand{\dK}[1]{\ora{K}_{\hspace{-0.1em}#1}}
\newcommand{\TT}[1]{\ora{TT}_{\hspace{-0.1em}#1}}
\newcommand{\F}{Forb_{ind}}
\newcommand{\overbar}[1]{\mkern 1.7mu\overline{\mkern-1.7mu#1\mkern-1.7mu}\mkern 1.7mu}
\tikzstyle{vertex}=[circle,draw, top color=gray!5, 
\tikzstyle{arc}=[->, > = latex',  thick]
\tikzstyle{edge}=[thick, blue]
\title{$(\dP6, \text{ triangle})$-free digraphs have bounded dichromatic number}
\author[1]{Pierre Aboulker}
\author[1,2]{Guillaume Aubian}
\author[2]{Pierre Charbit}
\author[3]{St\'ephan Thomass\'e}
\affil[1]{DIENS, \'Ecole normale sup\'erieure, CNRS, PSL University, Paris, France.}
\affil[2]{Université de Paris, CNRS, IRIF, F-75006, Paris, France.}
\affil[3]{Laboratoire d'Informatique du Parallélisme, \'Ecole Normale Sup\'erieure de Lyon, 69364 Lyon, Cedex 07, France}
\begin{document}

\maketitle

\begin{abstract}
The dichromatic number of an oriented graph is the minimum size of a partition of its vertices into acyclic induced subdigraphs. We prove that oriented graphs with no induced directed path on six vertices and no triangle have bounded dichromatic number. This is one (small) step towards the general conjecture asserting that for every oriented tree $
T$ and every integer $k$, any oriented graph that does not contain an induced copy of $T$ nor a clique of size $k$ has dichromatic number at most some function of $k$ and $T$.

\end{abstract}

\section{Introduction}

In this paper, we only consider \emph{graphs} or \emph{directed graphs} (\emph{digraphs} in short) with no loops, no parallel edges or arcs nor anti-parallel arcs (in particular our digraphs contains no cycle of length $2$).

Given an undirected graph $G$, we denote by $\omega(G)$ the size of a maximum clique of $G$ and by $\chi(G)$ its chromatic number.  
A class of graphs $\mC$ is \emph{$\chi$-bounded} if there exists a function $f$, such that every graph $G$ in $\mC$ satisfies $\chi(G)\leq f(\omega(G))$. 

Given a graph (resp. a digraph) $H$, we denote by $\F(H)$ the class of graphs (resp. digraphs) that do not contain $H$ as an induced subgraph (resp. induced subdigraph). 
A celebrated and still wide open question in the area of graph colouring is the following conjecture of \Gya~\cite{G75} and Sumner~\cite{S81} (see~\cite{SS20} for a survey on $\chi$-boundedness). 

\begin{conjecture}[\Gya-Sumner]
For any forest $F$, $\F(F)$ is $\chi$-bounded.
\end{conjecture}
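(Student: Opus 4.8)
The plan is to reduce the conjecture to the case of a single tree and then attack that case by induction on the tree, feeding the induction with a breadth-first-search layering argument. First I would invoke the standard (and relatively easy) reduction showing that it suffices to treat connected $F$: if $\F(T)$ is $\chi$-bounded for every tree $T$, then $\F(F)$ is $\chi$-bounded for every forest $F$, with only a bounded blow-up of the bounding function when the components of $F$ are handled one at a time. I would then induct on $|V(T)|$. Two base cases are already available: if $T$ is a star then excluding it forces bounded maximum degree, giving a linear bound on $\chi$; and if $T = P_t$ is a path, a classical theorem of \Gya gives $\chi(G) \le (t-1)^{\omega(G)-1}$ for every $P_t$-free graph $G$.

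The inductive engine would be a neighborhood-chromatic argument. Suppose $G$ has no induced $T$, has clique number at most $k$, and, for contradiction, has enormous chromatic number. Pass to a connected component and run a breadth-first search from a root, producing layers $L_0, L_1, L_2, \dots$ in which every edge lies inside a layer or between consecutive ones. Since $V(G)$ is partitioned into its even and its odd layers, $\chi(G) \le \chi(G[\text{even}]) + \chi(G[\text{odd}])$, and because distinct layers of the same parity are non-adjacent, each term is the maximum chromatic number of a single layer; hence some layer $L_i$ satisfies $\chi(G[L_i]) \ge \chi(G)/2$. Every vertex of $L_i$ has a neighbor in $L_{i-1}$, so $L_{i-1}$ dominates a region of huge chromatic number. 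The decisive move is to find a vertex (or bounded set) $v \in L_{i-1}$ whose neighborhood inside $L_i$ still carries large chromatic number; one then embeds a chosen vertex of $T$ at $v$ and recurses, searching for the rest of $T$ in a high-chromatic, still-clean portion of $N(v) \cap L_i$.

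This template is exactly what succeeds for trees of radius at most $2$, the theorem of Kierstead and Penrice: after placing the single central vertex one must only embed pairwise non-adjacent stars into its neighborhood, and the layering controls precisely the adjacencies that matter. To go further I would root $T$, place its root by the argument above, and recurse into each child subtree inside a private high-chromatic slice of the neighborhood, iterating the neighborhood-chromatic step once per level of $T$.

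The step I expect to be the true obstacle --- and the reason the conjecture is still open --- is maintaining control of long-range non-adjacencies once $T$ has radius $3$ or more. When embedding the deeper levels of $T$, I must ensure each newly placed vertex is non-adjacent to all previously placed vertices several levels higher, not merely to its parent; but BFS layering only separates consecutive levels, so after two or three recursive descents the bookkeeping that keeps distant branches independent breaks down and the chromatic number surviving in the private region can no longer be certified. Finding the right notion of a chromatically robust, independence-preserving substructure that persists under repeated refinement is exactly where new ideas are required. The directed, triangle-free instance solved in this paper --- bounding the dichromatic number of $(\dP6,\text{triangle})$-free digraphs --- can be read as one such controlled special case, in which the forbidden triangle together with the short forbidden directed path tame precisely these adjacencies.
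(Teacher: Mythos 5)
This statement is the \Gya--Sumner conjecture itself, which the paper does not prove and does not claim to prove: it is stated as an open conjecture, with only its known special cases cited (stars, paths, and in the directed analogue the orientations of $P_4$ and the present $\dP6$ result). So there is no proof in the paper to compare against, and your text is not a proof either --- by your own admission in the final paragraph, the inductive engine breaks for trees of radius $3$ or more, which is precisely why the conjecture has been open since 1975. A proof proposal that ends by identifying the step it cannot carry out establishes nothing; the correct response to this statement is to recognize it as open.

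Beyond that, the ``decisive move'' in your sketch fails much earlier than you concede. You want a vertex $v \in L_{i-1}$ whose neighbourhood inside the high-chromatic layer $L_i$ still carries large chromatic number. This is already impossible in triangle-free graphs: there, every neighbourhood is a stable set, so $\chi(N(v) \cap L_i) = 1$ for every $v$, while $\chi(G)$ is unbounded (Mycielski graphs, shift graphs). Passing to a bounded set of vertices does not rescue this, since with $\omega(G) \leq k$ one still cannot certify that any bounded family of neighbourhoods captures unbounded chromatic number; the whole point of $\chi$-boundedness problems is that high chromatic number need not localize in neighbourhoods. Consequently your recursion has no valid first descent, and the Kierstead--Penrice radius-two theorem that you invoke as evidence is not proved by BFS layering plus neighbourhood recursion --- it requires substantially heavier machinery (template arguments and carefully constructed special sets) exactly because the naive layering argument fails. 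Your base cases are fine (stars via Ramsey applied to neighbourhoods under bounded clique number, paths via \Gya's bound $\chi \leq (t-1)^{\omega - 1}$), but everything past them is a known-to-be-insufficient heuristic rather than an argument.
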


In this paper, we study an analogue of this conjecture for digraphs. For a digraph $D$ we denote by $\omega(D)$ the clique number of the underlying graph of $D$ and by $\dic(D)$ its {\em dichromatic number}, that is the minimum integer $k$ such that the set of vertices of $D$ can be partitioned into $k$ acyclic subdigraphs. A class of digraphs $\mC$ is $\dic$-bounded if there exists a function $f$ such that every digraph $D$ in $\mC$ satisfies $\dic(D)\leq f(\omega(D))$. 

\begin{conjecture}[Aboulker, Charbit, Naserasr \cite{ACN21}]\label{conj:tree_chi_bounded}
For any oriented forest $\ora F$, $\F(\ora F)$ is $\dic$-bounded.
\end{conjecture}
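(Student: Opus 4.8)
The plan is to follow the template that has driven essentially all progress on the undirected Gyárfás–Sumner conjecture, transported to the dichromatic setting, while being candid that the full statement is open and that any complete proof must overcome a genuinely new obstacle in the directed world. First I would reduce to the case of a single oriented tree $\ora T$: a standard iterated-neighbourhood argument shows that if $\F(\ora T)$ is $\dic$-bounded for every oriented tree $\ora T$, then $\F(\ora F)$ is $\dic$-bounded for every oriented forest $\ora F$, since one can search for the components of $\ora F$ inside pairwise ``far apart'', and hence arc-anticomplete, regions of a digraph of large dichromatic number. From there I fix an oriented tree $\ora T$ on $t$ vertices and induct on $t$ by removing a leaf. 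The genuinely easy cases are the stars: forbidding an all-out star while keeping $\omega(D)$ bounded forces every out-neighbourhood to have bounded clique number and bounded independence number, so by Ramsey the out-degrees are bounded, whence $D$ has bounded degeneracy and bounded $\dic(D)$.

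Second, I would try to develop a directed layering machinery, which is the heart of every such proof. In the undirected case one uses that BFS layers two apart are anticomplete, so the even (resp.\ odd) layers form a disjoint union and $\chi(G)\le 2\max_i\chi(G[N_i(v)])$; this lets one peel off a single distance layer carrying essentially all the chromatic number and build the tree greedily, one level at a time. The directed analogue I would attempt is an out-BFS from a well-chosen vertex, using the reachability operator $\Reach$: set $L_i$ to be the vertices at out-distance exactly $i$ from $v$ and seek a layer $L_i$ with $\dic(D[L_i])$ comparable to $\dic(D)$. Given such a cleanly-structured, arbitrarily deep layered region of large dichromatic number and bounded clique number, one embeds $\ora T$ by reading a deep template through the layers, attaching each new leaf at a vertex joined to the already-embedded part by an arc of the prescribed orientation and anticomplete to the remainder.

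Third, I would have to carry the clique bound through the whole argument: since the target is a bound on $\dic(D)$ in terms of $\omega(D)$, every extraction of a sub-structure of large dichromatic number must preserve the bound on $\omega$, so the induction must be organised so that all intermediate digraphs keep $\omega$ below the same constant. The embedding step must moreover produce an \emph{induced} copy of $\ora T$, which in the directed setting means ruling out both a chord and a chord of the wrong orientation at every attachment — considerably more delicate than in the undirected case, and precisely the kind of control that already demanded substantial work in the $\ora F=\dP6$, triangle-free special case proved in the present paper.

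Finally, the main obstacle, which I expect to absorb almost all of the effort and to be the real risk of failure. Unlike independence, acyclicity does not localise to distance layers: two out-BFS layers that are far apart can still be joined by backward arcs, so there is no reason for a union of non-consecutive layers to be acyclic, and consequently no clean inequality of the form $\dic(D)\le 2\max_i \dic(D[L_i])$ is available. Supplying a correct substitute — a structural statement that forces a single, cleanly layered region of large dichromatic number out of which an arbitrarily deep, orientation-controlled template can be read off — is exactly the missing ingredient. This is why the general conjecture remains open and why this paper retreats to $\ora F=\dP6$ together with triangle-freeness; replacing the directed-BFS layering heuristic by a provable decomposition is where I would expect the plan to stand or fall.
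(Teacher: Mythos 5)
The statement you are asked about is a \emph{conjecture}: the paper states it (attributed to \cite{ACN21}) precisely because no proof exists, and the paper itself only establishes the very special case $\ora F = \dP6$ with $\omega \leq 2$ (Theorem~\ref{thm:main_thm}). Your text is accordingly not a proof but a research plan, and to your credit you say so explicitly. The parts of the plan that are solid are exactly the parts already in the literature: the reduction from oriented forests to oriented trees is standard (Proposition 1.6 of \cite{S21}), and the case of oriented stars is a theorem of Chudnovsky, Scott and Seymour \cite{CS19} --- your Ramsey argument is fine for the all-out star (bounded independence and clique number in each out-neighbourhood gives bounded out-degree, and out-degeneracy $d$ gives $\dic \leq d+1$), though general oriented stars with mixed leaves already require the more careful argument of \cite{CS19}.

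The genuine gap is the one you name yourself, and it is fatal to the plan as written: there is no directed analogue of the undirected fact that BFS layers at distance two are anticomplete. For out-BFS layers $L_i$, arcs may run from $L_i$ back to any $L_j$ with $j \leq i+1$, so non-consecutive layers are not anticomplete, unions of alternate layers need not control $\dic$, and no inequality of the form $\dic(D) \leq 2\max_i \dic(D[L_i])$ holds; hence the inductive engine (``find one deep layer carrying the dichromatic number, peel, embed a leaf'') never starts. It is worth noting that the closest known substitute is precisely the machinery this paper uses: the dipolar-set lemma (Lemma~\ref{lem:dipolar}, from \cite{ACN21}), in which one grows mixed in/out iterated neighbourhoods $N_k(X)$ around a carefully chosen seed (here, a shortest odd directed cycle) and exploits that any directed cycle leaving the set must cross both $S^+$ and $S^-$. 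But this only yields constant-depth control --- in the paper, depth $3$, and only after heavy case analysis specific to $\dP6$ and triangle-freeness --- rather than the arbitrarily deep, orientation-controlled layered region your embedding step requires. So your diagnosis of where the plan stands or falls is accurate, but what you have is an identification of the open problem, not a proof of the conjecture.
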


It is enough to prove it for oriented trees (the proof is the same as for the undirected case, and can be found in~\cite{S21}, Proposition 1.6). 
An \emph{oriented star} is an oriented tree with at most one non-leaf vertex. 
\Chu, Scott and Seymour~\cite{CS19} proved it for oriented stars  as well as for two of the four possible orientations of the path on $4$ vertices: $\rightarrow \leftarrow \leftarrow$ and $\leftarrow \rightarrow \rightarrow$ (they actually prove that  for any integer $k$ and any oriented graph $\vec H$ where $\vec H$ is either an oriented star or $\rightarrow \leftarrow \leftarrow$, or $\leftarrow \rightarrow \rightarrow$, any digraph in $\F(\vec H)$ with clique number at most $k$ can be partitioned into a bounded number of stable sets, which is clearly stronger). Cook,  Masar\'ik,  Pilipczuk, Reinald and Souza~\cite{CMPRS22} proved it for the two other  orientations of the paths on $4$ vertices: $\rightarrow\rightarrow\rightarrow$ and $\rightarrow \leftarrow \rightarrow$. Nothing more is known.

Proving the conjecture for directed paths is already a very challenging case. 
In this paper, we go a step further in this direction by proving the following, where $\dP6$ denotes the directed path on $6$ vertices.

\begin{theorem}\label{thm:main_thm}
For every $D \in \F(\dP6)$ with $\omega(D) \leq 2$, $ \dic(D) \leq 382$.
\end{theorem}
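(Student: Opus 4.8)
The plan is to reduce to strongly connected digraphs, set up a breadth-first layering by out-distance from a root, use a modular coloring to confine all monochromatic directed cycles to single layers, and finally bound the dichromatic number of a single layer by an absolute constant using the triangle-free and $\dP6$-free hypotheses. Throughout, $D$ is an oriented graph with $\omega(D)\le 2$ (so every neighborhood is an independent set) and with no induced $\dP6$.

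First I would reduce to the strongly connected case. Since an arc between two distinct strong components cannot lie on a directed cycle, an acyclic partition can be assembled componentwise, giving $\dic(D)=\max_S \dic(D[S])$ over strong components $S$. So I may assume $D$ is strongly connected; fix a root $r$ and let $L_i$ be the set of vertices at out-distance exactly $i$ from $r$. Because $d^+(r,v)\le d^+(r,u)+1$ whenever $u\to v$, every arc from $L_i$ lands in some $L_j$ with $j\le i+1$; I call it forward if $j=i+1$, level if $j=i$, and backward if $j<i$. Coloring $v\in L_i$ by $i\bmod 2$ and refining each color class by an optimal within-layer acyclic partition, one checks that every monochromatic arc is either a level arc inside a single layer-part (which is acyclic) or strictly decreases the layer index; hence no monochromatic directed cycle can leave a single layer, and I obtain the clean reduction $\dic(D)\le 2\,\max_i \dic(D[L_i])$.

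The heart of the argument, and the step I expect to be the main obstacle, is to bound $\dic(D[L_i])$ by an absolute constant. Here I would exploit that every vertex of $L_i$ is reachable from $r$ by a shortest directed path, which (being geodesic and living in a triangle-free digraph) has no forward chords and no distance-two chords, so its only possible chords are backward arcs of span at least three. Given a long directed path $u_0\to u_1\to\cdots$ inside $L_i$, I would prepend the last vertices of such a shortest path to $u_0$ and examine windows of six consecutive vertices: triangle-freeness kills all distance-two chords, so if such a window had no further chord it would be an induced $\dP6$, which is forbidden. Thus every length-six window must contain a backward chord, and each such chord produces a short directed cycle (on $4$, $5$, or $6$ vertices) together with a domination relation onto an earlier vertex. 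The task is to turn this "a chord in every window" phenomenon into a genuine structural bound — for instance, that within a single layer the digraph admits a vertex ordering with boundedly many back-arcs, or decomposes into boundedly many acyclic pieces — and, crucially, to ensure the resulting recursion terminates with a constant independent of $|V(D)|$ rather than circling back to a digraph as complex as $D$ itself.

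Finally, I would assemble the pieces and track constants. Combining the strong-component reduction, the factor-two layering bound, and the absolute bound on within-layer dichromatic number yields a universal constant, and the specific value $382$ is what emerges after optimizing the case analysis on how backward chords and short cycles can be arranged across consecutive layers. I expect the delicate part to lie entirely in the within-layer analysis: guaranteeing that the interaction of backward arcs with the BFS paths cannot recreate arbitrarily intricate strongly connected behavior, so that the bound closes on a finite number rather than recursing indefinitely.
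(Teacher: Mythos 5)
Your layering reduction is sound --- colouring by layer parity and refining within layers indeed confines every monochromatic directed cycle to a single layer, so $\dic(D)\le 2\max_i\dic(D[L_i])$ --- but it buys essentially nothing, and the step you yourself flag as ``the main obstacle'' is a genuine gap: it \emph{is} the whole theorem. The class of triangle-free $\dP6$-free digraphs is hereditary, so each $D[L_i]$ is again an arbitrary member of the class, and a single layer can contain almost all of $D$; bounding $\dic(D[L_i])$ by an absolute constant is therefore not a smaller problem but a restatement of the statement to be proved (the properness of the layers only yields a recursion of the shape $f(n)\le 2f(n-1)$, i.e.\ an exponential bound in $|V(D)|$, not a constant). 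Moreover, the two concrete devices you offer do not close it. Prepending the tail of a shortest $r$-to-$u_0$ path to a directed path inside $L_i$ does not produce induced copies of $\dP6$, because geodesic vertices in earlier layers may be adjacent to arbitrarily many vertices of $L_i$ (every vertex of $L_i$ has an in-neighbour in $L_{i-1}$, and backward arcs from $L_i$ into the geodesic are unconstrained), so the concatenation need not be induced. And ``every window of six consecutive vertices of a directed path carries a chord'' is merely the definition of $\dP6$-freeness inside the layer --- hypothesis, not structure --- and no part of your sketch converts it into a bounded acyclic decomposition.

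The paper closes exactly this loop with a different mechanism: a removable piece of \emph{bounded} dichromatic number, rather than a global layering. It takes a shortest \emph{odd directed cycle} $C$ (if none exists, $\dic(D)\le 2$ by Lemma~\ref{lem:odd_cycle}) and proves that $S=C\cup N(C)\cup N_{2}(C)\cup N_{3}(C)$ is a dipolar set with $\dic(S)\le 191$; Lemma~\ref{lem:dipolar} then gives $\dic(D)\le 2\cdot 191=382$ via a minimal-counterexample argument in which a dicolouring of $D\setminus S$ always extends over $S$ --- this is what makes the recursion terminate at a constant, the feature your plan lacks. The constant bounds on the pieces of $S$ all exploit the minimality and oddness of $C$: parity arguments produce shorter odd closed trails (as in Claim~\ref{clm:hard}), and Lemma~\ref{lem:two_push} manufactures induced $\dP4$'s that extend to forbidden $\dP6$'s; the one stubborn piece, $N_2^{+}(C)\cap N_2^{-}(C)$, is tamed by first proving the theorem for the subclass $\F(\{\dP6,\dC{3,2}\})$ (Claim~\ref{clm:C32_bounded}). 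To salvage your approach you would need to replace ``each layer has constant dichromatic number'' by the existence, in every digraph of the class, of a bounded-dichromatic set whose deletion interacts controllably with directed cycles --- which is precisely the dipolar-set notion; your BFS layers, by contrast, are not dipolar and come with no such guarantee.
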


Note that we did not try to optimise the bound.

\paragraph{Context and Related Works}

 It has been a central question in graph theory over the past 40 years to understand what substructures are forced by large chromatic number. Or equivalently, which are the substructures that, if forbidden, result in bounded chromatic number. The notion of $\chi$-boundedness deals with this question.

Similarly, the notion of $\dic$-boundedness deals with the analogue question for digraphs and dichromatic number: a class $\cal C$ is $\dic$-bounded if for every $k$ there exists a value $\phi_k$ such that any digraph in $\mC$ with dichromatic number larger than $\phi_k$ must contain some orientation of a clique (or {\em tournament}) on $k$ vertices. It turns out that the acyclic tournament on $k$ vertices (denoted by $\TT k$) is sufficient to characterize this notion: indeed every tournament on $2^k$ vertices contains $\TT k$, and therefore a class $\cal C$ is $\dic$-bounded if and only if for every $k$ there exists a value $c_k$ such that any digraph in $\mC$ of dichromatic number at least $c_k$ contains a $\TT k$.

More generally, given a class of digraphs $\mC$, a digraph $H$ is a \emph{hero in $\mC$} if there is a constant $c_H$ such that digraphs of $\mC$ that do not contain $H$ as an induced subdigraph have dichromatic number at most $c_H$. The discussion above is that a class $\mC$ is $\dic$-bounded if and only if for every integer $k$, $\TT k$ is a hero in $\mC$, and Conjecture~\ref{conj:tree_chi_bounded} can be rephrased as : \emph{for every oriented forest $\ora F$, for every integer $k$, $\TT k$ is a hero in $\F(\ora F)$}.  Additionally, a result of~\cite{HM12} implies that if $H$ is {\em not} an oriented forest, then no digraph is a hero in $\F(H)$ except for $K_1$ (the digraph on one vertex) and $\TT 2$. 
 
  In a seminal paper, Berger et al.~\cite{hero} give a simple inductive characterization of  heroes in the class of tournaments (these contain, of course, $\TT k$, but many more).
 Note that if a class of digraphs $\mathcal C$ contains all tournaments, then a hero in $\mc C$ is in particular a hero in tournaments, but a hero in tournaments does not need to be a hero in $\mathcal C$. Every class considered in the following contains all tournaments.

 Let $\ob{K_k}$ be the digraph on $k$ vertices with no arc and observe that the class of tournaments is the same as $\F(\ob{K_2})$. 
 Harutyunyan et al.~\cite{HLNT19} extended the above result of Berger at al. by proving that, for every $k \geq 3$, heroes in $\F(\ob{K_k})$ are the same as heroes in tournaments.

Following these works, a systematic study of heroes in classes of digraphs of the form $\F(\ora F)$ where $\ora F$ is an oriented forest has been initiated in~\cite{ACN21}. 
In particular, it is proved that if $\ora F$ is not a disjoint union of oriented stars, then the only possible heroes in $\F(\ora F)$ are the transitive tournaments.  
On the other hand, it was conjectured in~\cite{ACN21} that if $\ora S$ is a disjoint union of oriented stars, then heroes in $\F(\ora S)$ are the same as heroes in tournaments, but this turned out to be false. 
In the paragraph below, we give a quick overview of the results on this particular question.

  The result of \Chu et al.~\cite{CS19} mentioned earlier implies that transitive tournaments are heroes in $\F(\ora S)$ for any disjoint union of oriented stars $\ora S$. 
In~\cite{AACmulti}, it is proved   that heroes in $\F(\dP3)$ are  the same as heroes in tournaments. 
Denote by $K_1 + \TT 2$ the disjoint union of $K_1$ and $\TT 2$ and observe that $\F(K_1 + \TT 2)$ is the class of oriented complete multipartite graphs. Heroes in $\F(K_1 + \TT 2)$ have  been investigated in~\cite{AACmulti} where it is proved that they form a strict super class of transitive tournaments, and a strict subclass of heroes in tournaments (which disproved the aforementionned conjecture of \cite{ACN21}). 
Finally,  heroes in $\F(\dK{1,2})$ (where $\dK{1,2}$ denotes the oriented star on three vertices with a vertex of out-degree $2$, digraphs in this class are called  \emph{locally-out tournaments}) were studied in ~\cite{AAC21} and~\cite{S21} (they are still conjectured to be the same as heroes in tournaments).
\medskip  

A digraph is \emph{$t$-chordal} if all its induced directed cycle have length $t$. Surprisingly, for every $t \geq 3$, the class of $t$-chordal digraphs has been proved~\cite{CHMS22} to not be $\dic$-bounded. 
Note that $t$-chordal digraphs are defined by forbidding an infinite number of digraphs, contrary to results mentioned above. 

An {\em oriented chordal graph} is an orientation of a chordal graph. This is again a class of digraphs with a distinct flavour, obtained by taking all possible orientations of a class of (undirected) graphs.  Heroes in oriented chordal  graphs have been fully characterised in~\cite{AAS22}.


\section{Definitions}

If $D$ be a digraph. We denote by $V(D)$ its set of vertices and by $A(D)$ its set of arcs. For $X\subset V(D)$ we define $N^+(X)=\{y\in V(D)\setminus X, \exists x\in X \text{ such that } xy\in A(D)\}$ and $
N^-(X)=\{y\in V(D)\setminus X, \exists x\in X \text{ such that } yx\in A(D)\}$. A {\em subdigraph} of $D$ is a digraph obtained from $D$ by removing some arcs and some vertices (with all arcs incident to these vertices). If only vertices are removed, it is  an {\em induced subdigraph}. For a given set of vertices $X \subseteq V(D)$, we denote by $D[X]$ the induced subdigraph obtained by removing $V(D)\setminus X$. Given a set of digraphs $\mc H$, we say that a digraph $D$ is \emph{$\mc H$-free} if it contains no induced subdigraph isomorphic to some member of $\mc H$. We denote by  $\F(\mc H)$ the class of $\mc H$-free digraphs. 
We say that $D$ is  {\em triangle-free} if $\omega(D) \leq 2$. 
Given a digraph $H$ we say that $D$ does not \emph{contain} (or \emph{has no}) $H$ if $D$ does not contain $H$ as a (not necessarily induced) subdigraph. 

We write $x \ra y$ when $xy \in A(D)$. 
A {\em trail} of a digraph $D$ is a sequence of vertices $x_1x_2\ldots x_p$ such that $x_ix_{i+1}\in A(D)$ for each $i<p$ and each arc is used once (but vertices can be used several times). 
It is \emph{closed} if $x_1 = x_p$ and its {\em length} is its number of arcs. We say {\em odd closed trail} for a closed trail of odd length. 
A  trail (resp. closed  trail) in which  vertices  are pairwise distinct is called a {\em directed path} (resp. {\em directed cycle}). 
The directed path of length $k-1$ is denoted by $\dP{k}$.

 A \emph{$k$-dicolouring} of $D$ is a partition of $V(D)$ into $k$ sets $V_{1}, \dots, V_{k}$ such that $D[V_{i}]$ is acyclic for every $i = 1, \dots, k$. 
 The \emph{dichromatic number} of $D$, denoted by $\dic(D)$ and introduced by  Neumann-Lara~\cite{NL82}  is the minimum integer $k$ such that $D$ admits a $k$-dicolouring. 
We will sometimes extend $\dic$ to subsets of vertices, using $\dic(X)$ to mean $\dic(D[X])$ where $X \subseteq V(D)$. For a set $\mC$ of digraphs  we write $\dic(\mC)$ to denote the maximum of $\dic(D)$ over all elements $D$ in $\mC$, and write $\dic(\mC)=\infty$ if this is not bounded. 


\section{Preliminaries}

A set of vertices $X$ is \emph{dipolar} if for every $x \in X$, $N^+(x) \subseteq X$ or $N^-(x) \subseteq X$. 
This notion was first introduced in \cite{ACN21} under the name "nice set" and has been renamed "dipolar set" in \cite{CMPRS22}. 
The main tool using dipolar sets is the following lemma. We include its proof because it is short and enlightening for people unfamiliar with the dichromatic number. 

\begin{lemma}[Lemma 17 in~\cite{ACN21}]\label{lem:dipolar}
Let $\mc C$ be a class of digraphs closed under taking induced subdigraph. Suppose that there exists a constant $c$ such that each digraph $D \in \mathcal C$ has a dipolar set $S$ such that $\dic(S) \leq c$. Then $\dic(\mc C) \leq 2c$. 
\end{lemma}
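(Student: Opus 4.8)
The plan is to induct on $|V(D)|$. Fix $D \in \mc C$ together with a dipolar set $S$ with $\dic(S) \le c$. If $V(D) = S$ then $\dic(D) = \dic(S) \le c \le 2c$ and we are done, so assume $S$ is a proper subset and set $R = V(D) \setminus S$. Since $\mc C$ is closed under taking induced subdigraphs, $D[R] \in \mc C$ and $|R| < |V(D)|$, so the induction hypothesis gives a $2c$-dicolouring of $D[R]$. The first real step is to split the dipolar set according to which neighbourhood is captured: put $S^+ = \{x \in S : N^+(x) \subseteq S\}$ and $S^- = S \setminus S^+$, so that by the definition of dipolar every $x \in S^-$ satisfies $N^-(x) \subseteq S$. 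I would then record the two boundary facts that drive the argument: every arc from $R$ into $S$ ends in $S^+$ (an in-neighbour outside $S$ forces $N^-(x) \not\subseteq S$, hence $x \in S^+$), and symmetrically every arc from $S$ into $R$ starts in $S^-$.

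Next I would build a colouring of $D$ on the palette $\{1,\ldots,c\} \times \{+,-\}$, which has exactly $2c$ colours. On $R$, use the $2c$-dicolouring supplied by induction, identifying its colours with the $2c$ pairs arbitrarily. On $S$, take a $c$-dicolouring $\sigma\colon S \to \{1,\ldots,c\}$ witnessing $\dic(S) \le c$, and assign $x \in S^+$ the colour $(\sigma(x),+)$ and $x \in S^-$ the colour $(\sigma(x),-)$. Since this only refines $\sigma$ on $S$ and reuses the same palette on $R$, the whole assignment uses at most $2c$ colours.

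The crux is to verify that no colour class contains a directed cycle, which I would do by examining an alleged monochromatic directed cycle $C$ of colour $(i,s)$ in three cases. If $C \subseteq S$, then all its vertices have $\sigma$-value $i$, contradicting that $\sigma$ is a dicolouring of $D[S]$; if $C \subseteq R$, it contradicts the induction colouring of $D[R]$. The decisive case is when $C$ meets both $S$ and $R$: being a directed cycle crossing the boundary, it uses at least one arc entering $S$ and at least one arc leaving $S$, so by the two boundary facts it contains a vertex of $S^+$ (coloured with sign $+$) and a vertex of $S^-$ (coloured with sign $-$). Hence $C$ cannot be monochromatic in the sign coordinate, a contradiction. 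Combining the three cases shows the $2c$-colouring is a valid dicolouring, so $\dic(D) \le 2c$, closing the induction.

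I expect the only delicate point to be this last case, and specifically the elementary but essential observation that a directed cycle intersecting both $S$ and $R$ must traverse the boundary in both directions; everything else is bookkeeping once the partition $S = S^+ \cup S^-$ and the sign coordinate are in place.
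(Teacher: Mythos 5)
Your proposal is correct and takes essentially the same approach as the paper: the same split of the dipolar set according to which neighbourhood is captured (your $S^{+}$ is the paper's $S^{-}$ up to naming, and you force a partition where the paper colours $S^{-}\setminus S^{+}$ separately), the same key observation that a directed cycle meeting both $S$ and $V(D)\setminus S$ must contain vertices of both parts, and the same extension of a $2c$-dicolouring of the rest. Your induction on $|V(D)|$ is merely a repackaging of the paper's minimal-counterexample argument.
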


\begin{proof}
Let $D \in \mc C$ be a minimal counter example, that is: $\dic(D)=2c+1$ and for every proper  subdigraph $H$ of $D$, $\dic(H)\le 2c$. 
By the hypothesis, $D$ admits a dipolar set $S$,  such that $\dic(S) \leq c$. 
Set $S^+ = \{x \in S \mid N^-(x) \subseteq S\}$ and $S^- = \{x \in S \mid N^+(x) \subseteq S\}$. By definition of a dipolar set, $S= S^+ \cup S^-$. 

The key observation is that any directed cycle that intersects $S$ and $V(D) \sm S$  intersects  both $S^+$ and $S^-$. 
Hence, by minimality of $D$, we can dicolour  $V(D) \sm S$ with $2c$ colors. We can then extend this dicoloring to $D$ by using colours $1, \dots, c$ for $S^+$ and $c+1, \dots, 2c$ for $S^- \setminus S^+$. 
\end{proof}

The strategy to prove our result is to show that every digraph in our class has a dipolar set with dichromatic number at most $191$ and then apply Lemma~\ref{lem:dipolar}. 
The next two results give simple techniques to bound the dichromatic number of a digraph, they  will  be extensively used to prove that the dichromatic number of some dipolar set is bounded. The first one is probably well known but we don't have any reference for it, the proof is very short.

\begin{lemma}\label{lem:odd_cycle}
If a digraph $D$ does not contain odd directed cycles as subdigraphs, then $\dic(D) \leq 2$. 
\end{lemma}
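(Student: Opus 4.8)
The plan is to construct an explicit $2$-dicolouring built from a parity function on directed-walk lengths inside each strongly connected component. The guiding intuition is that forbidding odd directed cycles makes the length-parity of directed walks consistent, exactly as for distances in an undirected bipartite graph, together with the observation that any directed cycle lives inside a single strongly connected component.

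First I would establish the central arithmetic fact: every closed directed walk of $D$ has even length. I would prove this by induction on the length $m$ of a closed walk $x_0 x_1 \cdots x_m$ with $x_0 = x_m$. If $x_0, \dots, x_{m-1}$ are pairwise distinct, the walk is a directed cycle, hence of even length by hypothesis. Otherwise some vertex repeats, say $x_i = x_j$ with $0 \le i < j \le m-1$; this splits the walk into two strictly shorter closed walks, the inner part $x_i \cdots x_j$ and the outer part $x_0 \cdots x_i x_{j+1} \cdots x_m$ (legitimate since $x_j = x_i$), each even by induction, so $m$ is even.

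Next, within each strongly connected component I would fix a root $r$ and define $\phi(v) \in \{0,1\}$ to be the parity of the length of any directed walk from $r$ to $v$. This is well defined: if $P, Q$ are two such walks and $R$ is a directed walk from $v$ back to $r$ (which exists by strong connectivity), then $P+R$ and $Q+R$ are closed walks, hence even, forcing $|P| \equiv |Q| \pmod 2$. Colouring each vertex by $\phi(v)$, with the choices in distinct components made independently, yields two classes $V_1, V_2$.

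Finally I would verify acyclicity. Every arc $u \to v$ inside a component satisfies $\phi(v) = \phi(u) + 1 \pmod 2$, so consecutive vertices of any directed cycle, which necessarily lies inside one component since it is strongly connected, receive different colours. Hence no directed cycle is monochromatic, both $D[V_1]$ and $D[V_2]$ are acyclic, and $\dic(D) \le 2$. The only genuinely delicate point is the first step: directed cycles are required to have pairwise distinct vertices, whereas closed walks may repeat vertices, so one cannot directly invoke the hypothesis and must argue through the splitting above; everything afterwards is routine bookkeeping.
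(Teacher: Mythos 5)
Your proof is correct, and it reaches the same parity phenomenon as the paper by a somewhat different route. The paper reduces to a strongly connected digraph and argues by contradiction that its underlying graph is bipartite: given an odd undirected cycle, it replaces each edge by a shortest directed path of matching parity, obtaining an odd closed trail, and then invokes (without proof) the fact that an odd closed trail contains an odd directed cycle. You instead prove directly, by induction with the splitting-at-a-repeated-vertex argument, that every closed directed walk is even, and then build the $2$-dicolouring explicitly as a parity potential on each strong component. Your version is more self-contained precisely where the paper is terse: your closed-walk lemma is exactly the splitting fact the paper's final sentence relies on implicitly, and your direct construction avoids both the contradiction and the shortest-path device. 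What the paper's route buys is brevity and a statement phrased in familiar undirected terms (bipartiteness of the underlying graph of a strongly connected digraph), though your within-component colouring yields the same bipartiteness of each strong component, so the two arguments are of equal strength; your handling of walks that may repeat vertices, and of components taken separately rather than via the usual reduction to the strongly connected case, is carried out carefully and contains no gap.
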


\begin{proof}
Let $D $ be a digraph with no odd directed cycle and since the dichromatic number of a digraph is the maximum of the dichromatic number of its strong components, we can assume without loss of generality that $D$ is strongly connected. In that case, we prove that the underlying graph $G$ of $D$ is in fact bipartite. Assume by contradiction $G$ contains an odd cycle $C = c_1\ra c_2\ra  \dots\ra  c_{2k+1}\ra c_1$. For $i = 1, \dots, {2k + 1}$, let $P_i$ be a shortest directed path from $c_i$ to $c_{i + 1}$ (indices being taken modulo $2k + 1$). Observe that either $P_i = c_ic_{i+1}$, or $c_{i+1}c_i \in A(D)$, in which case $P_i$ has odd length, for otherwise $P_i \cup \{c_{i+1}c_{i}\}$ is an odd directed cycle.  Hence the union of the $P_i$ for $i = 1 \dots 2k + 1$ forms a closed odd trail, which contains an odd directed cycle, a contradiction. 
\end{proof}

The next result is the dichromatic version of the celebrated Gallai-Roy-Vitaver theorem asserting that the chromatic number is upper-bounded by the largest size of a directed path. In a nutshell: the dichromatic number is upper-bounded by the largest size of a directed path of some feedback arc set.

\begin{proposition}\label{prop:noPk}
Let $D$ be a digraph. Given a total ordering of the vertices of $D$, we say that an arc $xy$ is forward if $x$ precedes $y$ in this ordering, and backward otherwise. The two following propositions are equivalent 
\begin{itemize}
    \item $\dic(D)\leq k$
    \item There exists an ordering of the vertices of $D$ such that there exists no directed path on $k+1$ vertices consisting only of backward arcs.
\end{itemize}
\end{proposition}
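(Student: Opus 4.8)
The plan is to prove the two implications separately, in both cases exploiting a vertex labelling that is forced to strictly increase along every backward arc. The point is that a directed path using only backward arcs is automatically strictly decreasing in the ordering (each backward arc $xy$ has $y$ before $x$), so bounding such a monotone label by $k$ is exactly the same as bounding the number of vertices on a backward directed path.

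For the implication from $\dic(D)\le k$ to the existence of a good ordering, I would start from a $k$-dicolouring $c : V(D) \to \{1,\dots,k\}$ and build the order so that backward arcs increase $c$. Concretely, consider the auxiliary digraph $D'$ on $V(D)$ whose arcs are exactly the arcs $xy$ of $D$ with $c(x) \ge c(y)$. The key claim is that $D'$ is acyclic: any directed cycle of $D'$ is also a directed cycle of $D$ along which $c$ is nonincreasing and therefore constant, so it lies inside a single colour class $D[c^{-1}(i)]$, contradicting that this induced subdigraph is acyclic. Taking any total order $\sigma$ extending a topological order of $D'$, every arc $xy$ with $c(x)\ge c(y)$ is forward; hence every backward arc $xy$ satisfies $c(x) < c(y)$, and along any backward directed path the values of $c$ strictly increase. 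Such a path therefore has at most $k$ vertices, i.e. there is no backward $\dP{k+1}$.

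For the converse, given an ordering with no backward $\dP{k+1}$, I would colour each vertex $v$ by $\ell(v)$, the maximum number of vertices on a directed path of backward arcs ending at $v$; by hypothesis $\ell(v)\in\{1,\dots,k\}$. The claim is that each colour class is acyclic. The useful preliminary observation is that the forward arcs alone form an acyclic digraph, since they all increase in $\sigma$, so every directed cycle of $D$ contains at least one backward arc $xy$. Such an arc can be appended to a longest backward path ending at $x$ (which is strictly decreasing and hence avoids $y$), giving $\ell(y)\ge \ell(x)+1$, so its two endpoints receive different colours. Consequently no directed cycle is monochromatic, $\ell$ is a valid $k$-dicolouring, and $\dic(D)\le k$.

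I expect the only genuinely delicate step to be the acyclicity of the auxiliary digraph $D'$ in the forward direction: one must verify that orienting together all arcs that are constant-or-decreasing in colour cannot create a directed cycle, which is precisely where the hypothesis that each colour class induces an acyclic subdigraph is used. Everything else is a direct translation between ``a label strictly increasing along backward arcs'' and ``short backward directed paths,'' in the spirit of the Gallai--Roy--Vitaver argument.
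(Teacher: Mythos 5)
Your proof is correct and takes essentially the same route as the paper: the converse direction is identical (colour each vertex by the maximum number of vertices on a backward directed path ending at it; you are in fact slightly more careful than the paper in noting that such a path, being strictly decreasing in the order, avoids $y$ and can thus be extended), and your forward direction produces the same kind of ordering in which every backward arc strictly changes colour. The only difference is cosmetic: you obtain the ordering as a topological sort of the auxiliary acyclic digraph of arcs with $c(x)\ge c(y)$, which requires the small extra lemma that this digraph is acyclic, whereas the paper sidesteps this by concatenating the colour classes in order and topologically sorting within each class.
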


\begin{proof}

One direction is easy : if $\dic(D)\leq k$ then there exists a partition $(C_1,C_2,\ldots C_k)$ of $V(D)$ with $C_i$ inducing an acyclic digraph. We construct an order on $V(D)$ by putting all vertices of $C_i$ before all vertices of $C_{i+1}$ for each $i$ and within each class we use a topological sort. It is clear that in the resulting order, there can be no patch on more than $k$ vertices where all  arcs go backward since a backward arcs goes from one class to a previous one.

For the converse direction, assume that $D$ has an ordering on its vertices such that there exists no directed path on $k+1$ vertices consisting only of backward arcs and let us prove that $D$ is $k$-dicolourable. 
For every $x \in V(D)$, define $f(x)$ the maximum number of vertices in a path consisting only of backward arcs and ending in $x$. By definition $1\leq f(x)\leq k$. Define $C_i=f^{-1}(i)$ and let us prove that $C_i$ does not contain any backward arc. Assume by contradiction $xy$ is such an arc. Then there exists a path on $i$ vertices ending in $x$ consisting only of backward arcs, which implies that $f(y)\geq i+1$, contradiction. So each $C_i$ induces an acyclic digraph, and thus $\dic(D)\leq k$.
\end{proof}

The last lemma of this section is used to find induced directed paths.

\begin{lemma}\label{lem:two_push}
Let $D$ be a triangle-free digraph, $C$ a (not necessarily induced) odd directed cycle of $D$ and $a \in N(C)$. Then there exists consecutive vertices $b\rightarrow c\rightarrow d$ of $C$ such that
\begin{itemize}
    \item either $a\ra b\ra c\ra d$ is an induced $\dP4$,
    \item or $b\ra c\ra d\ra a$  is an induced $\dP4$,
    \item or $a\ra b\ra c\ra d$ is a $C_4$ (in particular, $a\in N^{+}(C)\cap N^{-}(C)).$
\end{itemize}
\end{lemma}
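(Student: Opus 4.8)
The plan is to exploit the hypothesis that $D$ is triangle-free to pin down, for any three consecutive vertices $x\ra y\ra z$ of $C$, which adjacencies with $a$ can occur, and then to \emph{scan} around $C$ by steps of two until a good window appears. Two observations drive everything. First, since any two consecutive vertices of $C$ are joined by an arc, triangle-freeness forbids $a$ from being adjacent to two consecutive vertices of $C$; so the neighbours of $a$ on $C$ are pairwise non-consecutive. Second, for three consecutive vertices $x\ra y\ra z$, the endpoints $x$ and $z$ are non-adjacent (otherwise $\{x,y,z\}$ is a triangle). Note also that because $\omega(D)\le 2$ there is no directed triangle, so $C$ has length at least $5$ and any three consecutive vertices are distinct.

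By reversing all arcs if necessary (this swaps the first and second conclusions and fixes the third), I may assume $a$ has an out-neighbour on $C$, say $a\ra c_i$. I then examine $c_{i+2}$ (indices mod the length of $C$), with $b,c,d$ taken to be $c_i\ra c_{i+1}\ra c_{i+2}$. If $a$ is \emph{not} adjacent to $c_{i+2}$, then $a\ra c_i\ra c_{i+1}\ra c_{i+2}$ is the desired induced $\dP4$: the adjacency $a c_{i+1}$ is excluded since $a\ra c_i\ra c_{i+1}$ would give a triangle, and $c_ic_{i+2}$ is excluded by the second observation, so no chord survives. If instead $c_{i+2}\ra a$, then $a\ra c_i\ra c_{i+1}\ra c_{i+2}\ra a$ is a $C_4$ (again chordless by the same two exclusions), giving the third conclusion. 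The only remaining possibility is $a\ra c_{i+2}$, in which case I \emph{continue} the scan from $c_{i+2}$ and repeat.

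It remains to show the scan terminates, and this is where the odd length of $C$ is essential and is the only real point of the argument. If the scan never stopped, it would certify $a\ra c_{i+2t}$ for every $t$; but since $C$ has odd length, stepping by two visits every residue class, so $a$ would be out-adjacent to \emph{all} vertices of $C$, in particular to two consecutive ones, contradicting the first observation. Hence a stopping configuration is reached and one of the first or third conclusions holds. The case where $a$ has an in-neighbour on $C$ is handled by the mirror scan (stepping by $-2$ from $c_j$ with $c_j\ra a$, testing $c_{j-2}$ and using the window $c_{j-2}\ra c_{j-1}\ra c_j$), which produces either the second or the third conclusion; since $a\in N(C)$, at least one of the two scans applies. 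The main obstacle to watch is precisely this termination step: one must verify that the forbidden "adjacent to two consecutive vertices'' configuration is genuinely forced by a non-terminating scan, which relies on $2$ being invertible modulo the odd length of $C$.
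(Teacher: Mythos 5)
Your proof is correct and takes essentially the same route as the paper: your step-by-two scan, stopping at the first window where $a$ fails to dominate $c_{i+2}$, is exactly the paper's choice of the maximum $p$ with $a\ra x_{2p+1}\in A(D)$, and your termination argument (oddness makes $2$ invertible, so a never-ending scan would make $a$ adjacent to two consecutive vertices of $C$, a triangle) is the same use of triangle-freeness that the paper invokes at the wrap-around. Both proofs then dispatch the in-neighbour case by reversing all arcs, so there is nothing to correct.
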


\begin{proof} Assume $a\in N^-(C)$. Let us denote by $x_1,\ldots,x_{2k+1}$ the vertices of $C$ (i.e. $\forall i\leq 2k$, $x_ix_{i+1}\in A(D)$ and $x_{2k+1}x_1\in A(D)$). Assume without loss of generality that $ax_{1}\in A(D)$. 
Let $1 \leq p \leq k$ be the maximum integer such that $ax_{2p+1} \in A(D)$.
Since the digraph is triangle-free, $ax_{2k+1}\notin A(D)$, so $p \leq k$. 
 It is straightforward to see that $b=x_{2p+1}$, $c=x_{2p+2}$, $d=x_{2p+3}$  satisfies either the first or third item of the lemma. By reversing the arcs of the digraph, the same proof works if $a\in N^+(C)$. 
\end{proof}

We will often use this lemma the following way : if $a\in N^+(C)\setminus N^-(C)$ (resp. $a\in N^-(C)\setminus N^+(C)$), then the first (resp. the second) output holds.


\section{Proof of Theorem~\ref{thm:main_thm}}
For a subset $X$ of vertices, we define recursively the sets $N_k^+(X)$, $N_k^{-}(X)$ and $N_k(X)$ by $N_{0}^+(X)=N_{0}^-(X)=N_0(X)=X$, and for $k\geq 1$ :
\begin{alignat*}{2}
&N_{k}^+(X)&&=N^+(N_{k-1}^+(X))\setminus \bigcup_{i<k} N_i(X) \\
&N_{k}^-(X)&&=N^-(N_{k-1}^-(X))\setminus \bigcup_{i<k} N_i(X) \\
&N_{k}(X)&&=N_{k}^+(X)\cup N_k^-(X)
\end{alignat*}
We gather in the following claim several straightforward facts that we will use in the proof.
\begin{claim}\label{clm:basicsets} For any $X\subset V$, the following hold
\begin{enumerate}
    \item\label{it:1} $N^{+}_1(X) = N^+(X)$, $N^{-}_1(X) = N^-(X)$ and $N_1(X) = N^+(X)\cup N^-(X)$
    \item\label{it:2} There are no arcs between $X$ and  $N_k(X)$ for $k>1$.
    \item\label{it:3} If $x\in N_{k-1}(X)$, then either $N^+(x)\subseteq \bigcup_{i\leq k} N_i(X)$ or $N^-(x)\subseteq \bigcup_{i\leq k} N_i(X)$.  
 
    \item\label{it:4} If $x\in N_{k}^+(X)$ (resp $ N_{k}^-(X)$), there exists a directed path $x_0x_1\ldots x_k$ (resp. $x_kx_{k-1}\ldots x_0$)  such that $x_k=x$ and $x_i\in N_{i}^+(X)$ for every $i\geq 0$.
    
\end{enumerate}
\end{claim}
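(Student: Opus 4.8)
The plan is to first isolate the one structural fact that underlies all four items: the layers $N_0(X), N_1(X), N_2(X), \dots$ are pairwise disjoint. This is immediate from the recursion, since for every $k\ge 1$ both $N_k^+(X)$ and $N_k^-(X)$ are defined by removing $\bigcup_{i<k}N_i(X)$, so $N_k(X)$ misses every earlier layer; and because $N_0(X)=X$ while the set-neighbourhoods $N^+(\cdot),N^-(\cdot)$ exclude their own argument, no positively-indexed layer meets $X$ either. I would record this disjointness once and invoke it repeatedly.

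Item~\ref{it:1} is a direct unwinding: $N_1^+(X)=N^+(N_0^+(X))\setminus N_0(X)=N^+(X)\setminus X$, and since $N^+(X)$ already excludes $X$ by definition this is just $N^+(X)$; symmetrically $N_1^-(X)=N^-(X)$, whence $N_1(X)=N^+(X)\cup N^-(X)$. The only point to watch is exactly this convention that $N^+(X)$ is taken disjoint from $X$. For item~\ref{it:2}, fix $k>1$ and suppose some $v\in N_k(X)$ were joined to $X$ by an arc. If $x\to v$ with $x\in X$ then $v\in N^+(X)=N_1^+(X)\subseteq N_1(X)$, and if $v\to x$ then $v\in N^-(X)\subseteq N_1(X)$; note $v\notin X$ as $k\ge 1$. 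Either way $v\in N_1(X)$, contradicting disjointness of the layers since $k>1$.

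Item~\ref{it:3} is the part that needs the most care, again because of the set-neighbourhood convention. Given $x\in N_{k-1}(X)$, I would split according to whether $x\in N_{k-1}^+(X)$ or $x\in N_{k-1}^-(X)$. In the first case set $Y=N_{k-1}^+(X)$: every out-neighbour of $x$ either lies in $Y$ or lies in the set-neighbourhood $N^+(Y)$, so $N^+(x)\subseteq Y\cup N^+(Y)$. Now $Y\subseteq N_{k-1}(X)$, and directly from the definition $N^+(Y)\subseteq N_k^+(X)\cup\bigcup_{i<k}N_i(X)$; hence $N^+(x)\subseteq\bigcup_{i\le k}N_i(X)$. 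The case $x\in N_{k-1}^-(X)$ gives $N^-(x)\subseteq\bigcup_{i\le k}N_i(X)$ by the mirror argument, which is precisely the stated disjunction.

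Finally I would prove item~\ref{it:4} by induction on $k$ for the $N_k^+$ statement, the $N_k^-$ statement following by reversing all arcs. For $k=0$ the single-vertex path suffices. For $k\ge 1$, membership $x\in N_k^+(X)\subseteq N^+(N_{k-1}^+(X))$ produces a predecessor $w\in N_{k-1}^+(X)$ with $w\to x$; the induction hypothesis yields a path $x_0\ldots x_{k-1}$ with $x_{k-1}=w$ and $x_i\in N_i^+(X)$, and appending $x_k=x$ extends it. This is a genuine directed path rather than a mere walk precisely because the $x_i$ lie in distinct layers $N_i(X)$, which are pairwise disjoint by the opening observation. Overall there is no deep obstacle: each part is a matter of unfolding the recursion, and the one place to stay alert is the convention that $N^+(S),N^-(S)$ are already disjoint from $S$ — this is what makes item~\ref{it:1} collapse cleanly and what forces the extra term $Y$ when bounding $N^+(x)$ in item~\ref{it:3}.
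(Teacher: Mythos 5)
Your proof is correct and follows exactly the route the paper intends: the paper dismisses items 1)--3) as immediate from the definitions and item 4) as an easy induction on $k$, and your write-up simply supplies those details (including the layer-disjointness observation and the convention that $N^+(S)$, $N^-(S)$ exclude $S$, which is indeed the only point requiring care). No gap; your argument is a faithful, more explicit version of the paper's one-line justification.
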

Items 1), 2) and 3) follow from the definition and 4) is easy to prove by induction on $k$.\\

Let now $D$ be a triangle-free digraph in $\F(\dP6)$. 
Let $C=x_1x_2\dots x_{2k+1}x_1$ be a  (not necessarily induced) odd directed cycle of $D$ of minimum length (we may assume it exists by Lemma~\ref{lem:odd_cycle}). 
During the proof, for simplicity, we write $C$ for $V(C)$, $D[C]$ for $D[V(C)]$ and $N_{k}(C)$ for $N_{k}(V(C))$.

We are going to prove that the set 
$$S= C \cup N(C) \cup N_{2}(C) \cup N_{3}(C)$$ 
is dipolar and has dichromatic number at most $191$, which implies Theorem~\ref{thm:main_thm} by Lemma~\ref{lem:dipolar}. 

\begin{claim}\label{clm:Sdipo}
$S$ is dipolar. Moreover, $\dic(N_{3}(C)) \leq 2$. 
\end{claim}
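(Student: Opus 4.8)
The plan is to prove the two assertions of Claim~\ref{clm:Sdipo} separately, starting with the easier bound on $\dic(N_3(C))$ and then using the structure it reveals to establish that $S$ is dipolar.

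\textbf{Bounding $\dic(N_3(C))$.} First I would observe that by Lemma~\ref{lem:odd_cycle} it suffices to show that $D[N_3(C)]$ contains no odd directed cycle as a subdigraph. Suppose for contradiction that $N_3(C)$ contains an odd directed cycle $C'$. The key idea is to produce a long induced directed path from the interaction between $C$ and $C'$, contradicting membership in $\F(\dP6)$. By Claim~\ref{clm:basicsets}(\ref{it:4}), every vertex $x \in N_3^+(C)$ is reached from $C$ by a directed path $x_0 x_1 x_2 x_3$ with $x_0 \in C$ and $x_i \in N_i^+(C)$, and symmetrically for $N_3^-(C)$; so the vertices of $N_3(C)$ sit at ``distance $3$'' from $C$ via such paths, and by Claim~\ref{clm:basicsets}(\ref{it:2}) there are no arcs between $C$ and $N_k(C)$ for $k>1$, which controls how these witnessing paths can chord back onto $C$. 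The strategy is to take a vertex $a \in C$ adjacent to $C'$ (or, more precisely, use Lemma~\ref{lem:two_push} applied to the odd cycle $C'$ together with a suitable neighbour) to extract an induced $\dP4$ inside $N_3(C) \cup N_2(C)$, and then prepend two vertices of the witnessing path back towards $C$ to extend it to an induced $\dP6$, using triangle-freeness and item~(\ref{it:2}) to rule out the chords that would shorten it.

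\textbf{Showing $S$ is dipolar.} Recall $S = C \cup N(C) \cup N_2(C) \cup N_3(C) = \bigcup_{i \le 3} N_i(C)$. I must show that for every $x \in S$, either $N^+(x) \subseteq S$ or $N^-(x) \subseteq S$. For $x \in N_2(C)$ this is immediate from Claim~\ref{clm:basicsets}(\ref{it:3}) with $k=3$: one of $N^+(x), N^-(x)$ is contained in $\bigcup_{i\le 3} N_i(C) = S$. The vertices at the ``outer boundary'', namely $x \in N_3(C)$, are the delicate ones: a priori $x$ could have both in- and out-neighbours in $N_4(C) \setminus S$. Here I would again invoke the $\dP6$-freeness. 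If $x \in N_3^+(C)$ had an out-neighbour $y \notin S$ (so $y \in N_4(C)$) and also, via the structure, an in-neighbour outside $S$, one could glue the witnessing path $x_0 x_1 x_2 x_3 = x$ from item~(\ref{it:4}) to $y$ to form an induced directed path that is too long. The cleanest formulation is to argue that for $x \in N_3^+(C)$ we have $N^-(x) \subseteq S$ (the in-neighbour chain points back towards $C$ and cannot escape to $N_4(C)$ without creating a forbidden path or a triangle), and symmetrically $N^+(x) \subseteq S$ for $x \in N_3^-(C)$; the remaining inner vertices $C \cup N(C) \cup N_2(C)$ are handled by items~(\ref{it:2}) and~(\ref{it:3}).

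\textbf{Main obstacle.} The hard part will be the careful case analysis that converts ``$N_3(C)$ contains an odd cycle'' (and the analogous escape at the $N_3$ boundary) into a genuine \emph{induced} $\dP6$. Lemma~\ref{lem:two_push} only manufactures an induced $\dP4$ or a $C_4$, so the real work is the extension step: prepending the two vertices of the depth-witnessing path from Claim~\ref{clm:basicsets}(\ref{it:4}) while verifying that none of the many potential chords exist. Ruling out these chords requires systematic use of triangle-freeness together with item~(\ref{it:2}) (no arcs between $C$ and $N_k(C)$ for $k>1$), and keeping track of which $N_i^\pm$ layer each vertex lives in so that an arc between two prospective path vertices would force either a triangle or a shorter odd cycle contradicting the minimality of $C$. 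Managing these adjacency constraints cleanly, rather than the high-level strategy, is where the proof's difficulty lies.
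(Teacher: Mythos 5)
There is a genuine gap here, and it is directional. For $x \in N^{+}_3(C)$ your ``cleanest formulation'' asserts $N^-(x) \subseteq S$ (and symmetrically $N^+(x) \subseteq S$ for $x \in N^{-}_3(C)$), but this is the wrong side of the neighbourhood: the gluing mechanism you yourself describe --- appending a neighbour $y$ of $x$ to the witnessing path $t \ra x_0 \ra x_1 \ra x_2 \ra x$ obtained from Claim~\ref{clm:basicsets}(\ref{it:4}) --- can only produce a directed $\dP6$ when $y$ is an \emph{out}-neighbour of $x$, because the witnessing path ends at $x$. An in-neighbour $z \ra x$ with $z \notin S$ extends no directed path forward and yields no contradiction; indeed $N^-(x) \subseteq S$ is false in general (a pendant vertex $z$ whose unique arc is $z \ra x$ lies in no $N_k(C)$, and creates neither a triangle nor an induced $\dP6$). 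The paper proves exactly the opposite containment, namely $N^+(x) \subseteq N(C) \cup N_{2}(C)$ for $x \in N^{+}_3(C)$, which is what dipolarity requires. Moreover, the step you defer as the ``main obstacle'' is the entire content of the argument and is not a generic chord-chase: one must split on the position of $x_1$. If $x_1 \in N^{+}(C) \setminus N^{-}(C)$, Lemma~\ref{lem:two_push} together with Claim~\ref{clm:basicsets}(\ref{it:2}) and triangle-freeness gives an induced $\dP6$ of the form $a \ra b \ra c \ra x_1 \ra x_2 \ra x_3$ outright, a contradiction; hence $x_1 \in N^{+}(C) \cap N^{-}(C)$, and then for any $y \in N^+(x)$ the path $t \ra x_0 \ra x_1 \ra x_2 \ra x \ra y$ forces $y$ to be adjacent to one of $t, x_0, x_1$ --- and it is precisely the membership $x_1 \in N^{+}(C) \cap N^{-}(C)$ that places $y$ in $N(C) \cup N_{2}(C)$ in the last case. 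None of this case analysis appears in your plan.

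Your detour through Lemma~\ref{lem:odd_cycle} for the bound $\dic(N_3(C)) \leq 2$ is also unnecessary once the correct containment is established: $N^+(x) \subseteq N(C) \cup N_{2}(C)$ for every $x \in N^{+}_3(C)$ immediately shows that $N^{+}_3(C)$ is a stable set (an arc inside it would be an out-arc landing in $N_3(C)$), and dually $N^{-}_3(C)$ is stable, so $N_3(C)$ is a union of two stable sets and $\dic(N_3(C)) \leq 2$ follows with no appeal to odd cycles. This is how the paper derives both assertions of the claim from the single containment, whereas your plan treats them as two separate arguments and, as written, the dipolarity half rests on the unprovable in-neighbour containment.
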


\begin{subproof}
To prove that $S$ is dipolar, we need to prove that for every vertex $x$ in $S$, etiher $N^+(x)$ or $N^-(x)$ is contained in $S$. Note that by Claim \ref{clm:basicsets} item 3, this is trivial if $x\in C\cup N_1(C) \cup N_2(C)$. 

Assume now that  $x \in N^{+}_3(C)$ and let us prove that $N^+(x) \subseteq N(C) \cup N_{2}(C)$, which will imply both parts of the claim, since this proves that $N^+_3(C)$ is an independent set.  

By Claim \ref{clm:basicsets} item 4, there exists a directed path $x_0\ra x_1\ra x_2\ra x_3$, where $x_3=x$ and $x_i\in N^+_i(C)$. If $x_1 \in N^{+}(C) \setminus N^{-}(C)$, then, by Lemma~\ref{lem:two_push}, there exists $a,b,c \in C$ such that $abcx_1$ is an induced $\dP4$. Since there is no arc between $C$ and $N_{2}(C) \cup N_{3}(C)$ (by Claim \ref{clm:basicsets} item \ref{it:2}) and $D$ is triangle-free, $a\ra b\ra c\ra x_1\ra x_2\ra x_3$ is an induced $\dP6$, a contradiction. 

So we can assume $x_1 \in N^{+}(C) \cap N^{-}(C)$. Consider $y\in N^+(x)$, and let us prove that $y\in N(C) \cup N_2(C)$. Let $t$ be an in-neighbour of $x_0$ in $C$ and observe that $t\ra x_0\ra x_1\ra x_2\ra x_3\ra y$ is a $\dP6$ and the only way for it not to be induced (because of (Claim \ref{clm:basicsets} item \ref{it:2})) is that $y$ is adjacent with one of $\{t,x_0,x_1\}$. If $y$ is adjacent with $t$ or $x_0$, then $y \in N(C)$. If $y$ is adjacent with $x_1$, and since $x_1 \in N^{+}(C) \cap N^{-}(C)$, we get that $y \in  N_{2}(C)$. We thus have proven that $y\in N(C) \cup N_2(C)$. Similarly, if $x \in N^{-}_3(C)$, then $N^-(x) \subseteq N(C) \cup N_{2}(C)$, which concludes the proof of this claim.

\end{subproof}

\begin{claim}\label{clm:C_bounded}
$\dic(D[C]) \leq 3$. 
\end{claim}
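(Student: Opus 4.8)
The plan is to delete a single vertex of $C$, show that what remains contains no odd directed cycle at all, and then invoke Lemma~\ref{lem:odd_cycle}. Concretely, fix any vertex of $C$, say $x_1$, and consider the induced subdigraph $D[C\setminus\{x_1\}] = D[\{x_2,\dots,x_{2k+1}\}]$ on the remaining $2k$ vertices.

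First I would show that $D[C\setminus\{x_1\}]$ contains no odd directed cycle as a subdigraph. Suppose for contradiction that $C'$ is one. Since $D[C\setminus\{x_1\}]$ is an induced subdigraph of $D$, the cycle $C'$ is also a directed cycle of $D$. It lives on at most $2k$ vertices, so its length (equal to its number of vertices, as it is a directed cycle) is at most $2k$; being odd, it is at most $2k-1 < 2k+1$. This is an odd directed cycle of $D$ strictly shorter than $C$, contradicting the choice of $C$ as a shortest odd directed cycle of $D$. Hence $D[C\setminus\{x_1\}]$ has no odd directed cycle, and Lemma~\ref{lem:odd_cycle} yields $\dic(D[C\setminus\{x_1\}]) \leq 2$.

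Finally I would reattach $x_1$ in its own colour class: a single vertex induces an acyclic digraph, so appending $\{x_1\}$ as a third class to a $2$-dicolouring of $D[C\setminus\{x_1\}]$ gives a partition of $V(C)$ into three acyclic parts. Each of the first two parts induces an acyclic subdigraph in $D[C]$ because it already does so in the induced subdigraph $D[C\setminus\{x_1\}]$. Therefore $\dic(D[C]) \leq 3$.

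The only genuinely delicate point, and the step I would double-check most carefully, is the claim that deleting one vertex destroys every odd directed cycle. This rests on two facts that are easy to state but essential: that $C$ is chosen as a shortest odd directed cycle of \emph{all} of $D$ (so that minimality rules out short odd cycles even when they exploit chords of $C$, and we never needed $C$ to be induced), and that any directed cycle of the induced subdigraph $D[C\setminus\{x_1\}]$ is automatically a directed cycle of $D$ on the same vertices. Notice that this argument uses neither triangle-freeness nor $\dP6$-freeness, only the minimality of $C$; the whole subtlety is the comparison between a cycle's vertex count and its length.
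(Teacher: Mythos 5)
Your proof is correct and matches the paper's argument exactly: the paper also deletes one vertex of $C$, notes that by minimality of $C$ the remainder has no odd directed cycle (so Lemma~\ref{lem:odd_cycle} gives dichromatic number at most $2$), and uses a third colour for the removed vertex. You have simply spelled out the details--including the key point that a directed cycle inside $D[C\setminus\{x_1\}]$ has at most $2k$ vertices and is a cycle of $D$--which the paper leaves implicit.
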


\begin{subproof}
By minimality of $C$, removing a vertex from $C$ yields a digraph with no odd directed cycle, which thus has dichromatic number at most $2$ by Lemma~\ref{lem:odd_cycle}.
\end{subproof}

\begin{claim} \label{clm:N+moinsN-_bounded}
$\dic(N^{+}(C) \setminus N^{-}(C)) \leq 4$ and  $\dic(N^{-}(C) \setminus N^{+}(C)) \leq 4$. 
\end{claim}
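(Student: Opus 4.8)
The plan is to cap the number of vertices on the longest directed path contained in $D[A]$, where I write $A := N^+(C)\setminus N^-(C)$, and then read off the bound from Proposition~\ref{prop:noPk}: if $D[A]$ has no directed path on five vertices, then no vertex ordering of $A$ can contain a backward directed path on five vertices, so $\dic(A)\le 4$. The statement for $N^-(C)\setminus N^+(C)$ will follow by reversing every arc of $D$, an operation that preserves triangle-freeness and membership in $\F(\dP6)$, sends the minimum odd directed cycle $C$ to a minimum odd directed cycle, and exchanges $N^+$ and $N^-$; so it suffices to treat $A$.

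The engine is Lemma~\ref{lem:two_push}. Every $a\in A$ lies in $N^+(C)\setminus N^-(C)$, so the third outcome of Lemma~\ref{lem:two_push} (which forces $a\in N^+(C)\cap N^-(C)$) is excluded and $a$ is the sink of an induced $\dP4$, say $b\to c\to d\to a$, whose first three vertices are consecutive on $C$; walking backwards along $C$ yields an arbitrarily long directed sub-path $\cdots\to e\to b\to c\to d$ behind $a$, which is chordless by minimality of $C$ (a backward or odd-length chord would create a shorter odd directed cycle). Suppose now that $a_1\to a_2\to a_3\to a_4\to a_5$ is a directed path in $D[A]$. Attaching the tail of $a_1$ gives the six-vertex directed walk $b\to c\to d\to a_1\to a_2\to a_3$; if it is an induced $\dP6$ we contradict $D\in\F(\dP6)$, and the whole task is to show that some such six-vertex stretch is always induced.

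The adjacencies I must control run between the path vertices $a_i$ and the cycle vertices: since each $a_i\in A$ emits no arc into $C$, the only possible ones are arcs from $C$ into $a_i$. Triangle-freeness removes the nearest of these (for instance $d\not\sim a_2$, as $d\to a_1\to a_2$ would close a triangle), and the chordlessness of the cycle-tail keeps its own part induced. When a farther blocking arc such as $c\to a_2$ occurs, I re-anchor the candidate $\dP6$ at the vertex $a_2$, using its own induced $\dP4$-tail supplied by Lemma~\ref{lem:two_push} together with the two following vertices $a_3,a_4$, and repeat; triangle-freeness again forbids the newly-nearest blocking arc. Because a directed $\dP5$ provides three successive starting points $a_1,a_2,a_3$, each carrying its own cycle-tail and two successors, the extra length furnishes enough room that the blocking arcs, constrained by triangle-freeness and by the fact that the in-neighbours of any single $a_i$ in $C$ form an independent set of $C$, cannot simultaneously spoil every candidate.

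I expect the main obstacle to be precisely this bookkeeping of blocking arcs: one must verify that a directed $\dP5$ in $D[A]$ always yields at least one genuinely induced $\dP6$, i.e.\ that the arcs from $C$ into the interior vertices of the path cannot be arranged to block all the anchored candidates without contradicting triangle-freeness or the minimality of $C$. Once the longest directed path of $D[A]$ is shown to have at most four vertices, Proposition~\ref{prop:noPk} gives $\dic(A)\le 4$ immediately, and the arc-reversed argument gives $\dic(N^-(C)\setminus N^+(C))\le 4$, completing the claim.
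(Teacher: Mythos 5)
Your outer scaffolding is fine (the appeal to Proposition~\ref{prop:noPk}, and arc-reversal for the $N^{-}(C)\setminus N^{+}(C)$ half), but the heart of your argument --- that $D[A]$, where $A = N^{+}(C)\setminus N^{-}(C)$, contains no directed path on five vertices --- is never actually proved: you explicitly defer the ``bookkeeping of blocking arcs'' and only express the expectation that one of the anchored candidate $\dP6$'s must survive. That verification is the entire content of the claim, so as written there is a genuine gap. There is also a false supporting assertion: subpaths of $C$ on five or more vertices need \emph{not} be chordless. Minimality of $C$ only excludes chords whose forward skip along $C$ is \emph{odd} (these close a shorter odd directed cycle), and triangle-freeness excludes skip $2$; a chord such as $x_i \to x_{i+4}$ closes an even cycle and is compatible with all the hypotheses. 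This is precisely why the paper only ever uses three consecutive vertices of $C$ at a time.

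More importantly, the gap cannot be filled, because the statement ``$D[A]$ has no directed $\dP5$'' is false. Take $C = x_1\to x_2\to x_3\to x_4\to x_5\to x_1$ (an induced directed $5$-cycle), add vertices $a_1,\dots,a_5$ with arcs $a_1\to a_2\to a_3\to a_4\to a_5$, $a_4\to a_1$, $a_5\to a_2$, and add the arcs $x_1,x_3\to a_1$; $x_2,x_4\to a_2$; $x_3,x_5\to a_3$; $x_2,x_4\to a_4$; $x_1,x_3\to a_5$. One checks that this digraph is triangle-free (each in-neighbourhood in $C$ is a stable set of $C$, the sets attached to adjacent $a_i$'s are disjoint, and there is no triangle among the $a_i$), that every directed cycle other than $C$ has length four (no arc leaves $\{a_1,\dots,a_5\}$ towards $C$), so $C$ is a minimum odd directed cycle, and --- by a finite but systematic check --- that the digraph is $\dP6$-free: every six-vertex directed path either uses all five vertices of $C$ (whose first and last vertices are adjacent), or contains one of the chords $a_4\to a_1$, $a_5\to a_2$ among its $a$-vertices, or is hit by one of the listed arcs from $C$ into a later $a_i$. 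Yet all of $a_1,\dots,a_5$ lie in $N^{+}(C)\setminus N^{-}(C)$ and $a_1\to a_2\to a_3\to a_4\to a_5$ is a directed path on five vertices there, so your route is blocked in principle, not just in execution. The paper avoids path-length considerations entirely: applying Lemma~\ref{lem:two_push} with the roles reversed --- to a hypothetical odd directed cycle $C'$ \emph{inside} $A$ and an apex $x_i\in C$ of minimal index $i\ge 3$, yielding the induced $\dP6$ given by $x_{i-2}\to x_{i-1}\to x_i\to a\to b\to c$ --- it shows that the two stable sets $N^{+}(x_1)$ and $N^{+}(x_2)$ meet every odd directed cycle of $A$, and then colours the remainder, which has no odd directed cycle, with two colours by Lemma~\ref{lem:odd_cycle}, giving $1+1+2=4$.
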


\begin{subproof}
Let us prove that $\dic(N^{+}(C) \setminus N^{-}(C)) \leq 4$.  
We first prove that $N^+(x_1) \cup N^+(x_2)$ intersects all odd directed  cycles of $N^{+}(C) \setminus N^{-}(C)$. Suppose that it is not the case, and let $C'$ be such an odd directed  cycle. 
Let $i\geq 3$ be minimum such that $x_i$ has an out-neighbour in $C'$ (so that $x_1, \dots, x_{i-1}$ don't). Since $C'\subset N^+(C)\setminus N^-(C)$, $x_i$ does not have an in-neighbour in $C'$, so by Lemma~\ref{lem:two_push} applied to $C'$, there are $3$ consecutive vertices $a, b, c$ of $C'$, such that $x_i \ra a \ra b \ra c$ is an induced $\dP4$. By the choice of $i$, we then have that $x_{i-2} \ra x_{i-1} \ra x_i \ra a \ra b \ra c$ is an induced $\dP6$, a contradiction.
Now, $N^{+}(C) \setminus N^{-}(C)$ can be partitioned into two stable sets and a digraph with no odd directed cycle, and thus be $4$-dicoloured.  
By directional duality,  $\dic(N^{-}(C) \setminus N^{+}(C)) \leq 4$.
\end{subproof}

\begin{claim}\label{clm:N2+moinsN2-_bounded}
   $\dic(N^{+}_2(C) \setminus N^{-}_2(C)) \leq 2$ and $\dic(N^{-}_2(C) \setminus N^{+}_2(C)) \leq 2$. 
\end{claim}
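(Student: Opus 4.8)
The plan is to mirror the argument of Claim~\ref{clm:N+moinsN-_bounded}, but aim for the stronger bound of $2$ by showing that $N^{+}_2(C) \setminus N^{-}_2(C)$ contains no odd directed cycle at all, and then invoke Lemma~\ref{lem:odd_cycle}. So suppose for contradiction that $C'$ is an odd directed cycle contained in $N^{+}_2(C) \setminus N^{-}_2(C)$. First I would locate a vertex of small layer index that attacks $C'$: by Claim~\ref{clm:basicsets} item~\ref{it:4}, every vertex $x \in N^{+}_2(C)$ is the end of a directed path $x_0 \ra x_1 \ra x_2 = x$ with $x_0 \in C$ and $x_1 \in N^{+}_1(C)$.

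The key structural step is to understand how $C'$ interacts with the earlier layers. Since $C' \subseteq N^{+}_2(C)$, by item~\ref{it:2} there are no arcs between $C$ and $C'$, so $C$ cannot be used directly to build a forbidden $\dP6$ ending in $C'$; instead the entering path must pass through $N_1(C)$. The plan is therefore to fix a vertex $w \in C'$, take its length-$2$ back-path $x_0 \ra x_1 \ra w$ with $x_1 \in N^+(C)$, and distinguish whether $x_1 \in N^+(C)\setminus N^-(C)$ or $x_1 \in N^+(C)\cap N^-(C)$. In the first case, Lemma~\ref{lem:two_push} applied to $C$ with the vertex $x_1$ gives an induced $\dP4$ of the form $a \ra b \ra c \ra x_1$ with $a,b,c \in C$; appending $x_1 \ra w$ and one further arc $w \ra w'$ along $C'$ yields $a \ra b \ra c \ra x_1 \ra w \ra w'$, which is an induced $\dP6$ (the newly added vertices $w,w'$ lie in $N_2(C)$, hence by item~\ref{it:2} are non-adjacent to $a,b,c\in C$, and triangle-freeness together with $x_1 \notin N^-(C)$ rules out the remaining chords), a contradiction.

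The main obstacle is the remaining case $x_1 \in N^+(C) \cap N^-(C)$, where Lemma~\ref{lem:two_push} only guarantees a $C_4$ rather than an induced $\dP4$ into $x_1$, so the short back-path does not immediately extend to a $\dP6$. Here I would instead exploit the freedom to walk further along $C'$: since $C'$ is an odd directed cycle and $w$ has the in-neighbour $x_1 \in N^-(C)$, I would use Lemma~\ref{lem:two_push} applied to $C'$ with a suitable vertex of $C$ — noting that a vertex $t \in C$ with $t \ra x_1$ exists (as $x_1 \in N^-(C)$) while $x_1 \notin C'$ has controlled adjacency to $C'$ — to produce an induced $\dP4$ starting inside $C'$, of the form $w \ra w' \ra w'' \ra w'''$, and prepend $x_0 \ra x_1 \ra w$, giving the $\dP6$ $x_0 \ra x_1 \ra w \ra w' \ra w'' \ra w'''$. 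The verification that this path is induced is the delicate part: one must check that none of $w',w'',w'''$ (all in $N_2(C)\setminus N^-_2(C)$) is adjacent to $x_0 \in C$ or $x_1 \in N_1(C)$, which follows from item~\ref{it:2} for $x_0$ and requires using that $C' \subseteq N^{+}_2(C)\setminus N^{-}_2(C)$ to control adjacencies with $x_1$ via triangle-freeness and the layer definitions.

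Finally, once both cases yield an induced $\dP6$ and hence a contradiction, we conclude that $N^{+}_2(C) \setminus N^{-}_2(C)$ has no odd directed cycle, so $\dic(N^{+}_2(C) \setminus N^{-}_2(C)) \leq 2$ by Lemma~\ref{lem:odd_cycle}. The bound $\dic(N^{-}_2(C) \setminus N^{+}_2(C)) \leq 2$ then follows by directional duality, reversing all arcs of $D$ and swapping the roles of $N^+$ and $N^-$ throughout. I expect the bookkeeping of which exact chords must be excluded in the $x_1 \in N^+(C)\cap N^-(C)$ case to be the step most likely to need a more careful, possibly longer, case analysis than sketched here.
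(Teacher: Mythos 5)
Your overall plan is exactly the paper's: show that $N^{+}_2(C) \setminus N^{-}_2(C)$ contains no odd directed cycle, invoke Lemma~\ref{lem:odd_cycle}, and dualize; and your split on whether the intermediate vertex $x_1 \in N^+(C)$ also lies in $N^-(C)$, with Case 1 handled via Lemma~\ref{lem:two_push} applied to $C$ and two extra steps along $C'$, coincides with the paper's Case 1. The problem is your Case 2, which is the heart of the claim, and the gap there is more than bookkeeping. First, Lemma~\ref{lem:two_push} cannot be applied to $C'$ ``with a suitable vertex of $C$'': by Claim~\ref{clm:basicsets} item~\ref{it:2} no vertex of $C$ is adjacent to $C'$ at all; the external vertex must be $x_1$ itself. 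Second, and more importantly, when applied with $x_1$ the lemma outputs an induced $\dP4$ \emph{containing} $x_1$, namely $x_1 \ra a \ra b \ra c$ with $a,b,c$ consecutive on $C'$ (this output is forced because $x_1$ has no in-neighbour on $C'$: any such in-neighbour would lie in $N^-_2(C)$ since $x_1 \in N^-(C)$, contradicting $C' \cap N^-_2(C) = \emptyset$ --- this is the disjointness you correctly gesture at). It never gives you an induced $\dP4$ of the form $w \ra w' \ra w'' \ra w'''$ lying wholly inside $C'$ and starting at your chosen out-neighbour $w$ of $x_1$, and no such path need exist: $C'$ is not assumed induced, so the chord between $w$ and $w'''$ is not excluded by triangle-freeness; moreover $x_1$ may have further \emph{out}-neighbours on $C'$ (only in-arcs from $C'$ are ruled out), and triangle-freeness kills only the chord $x_1 w'$, not $x_1 \ra w''$ or $x_1 \ra w'''$. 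So the six-vertex path $x_0 \ra x_1 \ra w \ra w' \ra w'' \ra w'''$ you propose need not be induced, and the verification you deferred would in fact fail.

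The repair is what the paper does: take the lemma's actual output $x_1 \ra a \ra b \ra c$ (four vertices), and complete the $\dP6$ by prepending \emph{two} vertices of $C$ rather than one, namely an in-neighbour $v \in C$ of $x_1$ (which exists because $x_1 \in N^+(C)$; note your parenthetical ``$t \ra x_1$ exists as $x_1 \in N^-(C)$'' has the paper's conventions backwards --- $x_1 \in N^-(C)$ means $x_1$ sends an arc \emph{into} $C$, and that is used only for the no-in-neighbour-on-$C'$ argument above) together with the predecessor of $v$ on $C$. The only new chord, between that predecessor and $x_1$, is excluded by triangle-freeness, and all arcs between $C$ and $\{a,b,c\}$ by Claim~\ref{clm:basicsets} item~\ref{it:2}. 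With that substitution your argument becomes the paper's proof; your Case 1 and the concluding duality are fine as written.
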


\begin{subproof}
We prove that $\dic(N^{+}_2(C) \setminus N^{-}_2(C)) \leq 2$. Assume by contradiction this is not the case, so that by Lemma~\ref{lem:odd_cycle} we get an odd directed cycle $C'$ in $N^{+}_2(C) \setminus N^{-}_2(C)$ .  Let $u$ be a vertex in $N^{+}(C) \cap N^-(C')$, which is non empty by definition of $N^{+}_2(C)$.

If $u \in N^{+}(C) \setminus N^{-}(C)$, then by Lemma~\ref{lem:two_push}, there exist $a,b,c \in C$ such that $a \ra b \ra c \ra u$ is an induced $\dP4$, which along with a vertex $v \in N^+(u) \cap V(C')$ and the out-neighbour of $v$ in $V(C')$ forms an induced $\dP6$, a contradiction (remember that by Claim \ref{clm:basicsets} Item \ref{it:2}, there is no arc between $C$ and $C'$). 

Thus $u \in N^{+}(C) \cap N^{-}(C)$ and since $V(C')$ is disjoint from $N^{-}_2(C)$, $u$ has no in-neighbour in $V(C')$. Hence, by Lemma~\ref{lem:two_push} applied on $C'$, there exist $a,b,c \in V(C')$ such that $u\ra a \ra b \ra c$ is an induced $\dP4$, which along with any $v \in N^-(u) \cap C$ and the in-neighbour of $v$ in $C$ forms an induced $\dP6$, a contradiction.
\end{subproof}

\begin{claim}\label{clm:hard}
$\dic(N^{+}(C) \cap N^{-}(C)) \leq 30$. Moreover, if for every $x \in C$, both $N_2^+(x)$ and $N_2^-(x)$ are  stable sets, then $\dic(N_2^+(C) \cap N_2^-(C)) \leq 30$. 
\end{claim}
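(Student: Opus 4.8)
The plan is to handle both statements by the same scheme, in each case reducing to Lemma~\ref{lem:odd_cycle}. Write $A=N^{+}(C)\cap N^{-}(C)$. Since $D$ is triangle-free, for every $x\in C$ the sets $N^{+}(x)$ and $N^{-}(x)$ are stable (an arc inside $N^{+}(x)$ would, together with $x$, form a triangle, and symmetrically for $N^{-}(x)$). Consequently, if I can exhibit a family $\mathcal F$ of sets, each of the form $N^{+}(x_i)\cap A$ or $N^{-}(x_i)\cap A$, whose union meets every odd directed cycle of $D[A]$, then deleting $\bigcup\mathcal F$ leaves a digraph with no odd directed cycle, of dichromatic number at most $2$ by Lemma~\ref{lem:odd_cycle}; giving each member of $\mathcal F$ its own colour then yields $\dic(A)\le |\mathcal F|+2$. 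The target $30$ is engineered as $30=28+2$, i.e. a family of $28$ stable sets.

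First I would record the local constraints forced by the minimality of $C$. If $x_a\to v\to x_b$ with $v\in A$, then $x_a\to v\to x_b\to x_{b+1}\to\cdots\to x_a$ is a directed cycle of length $2k+3-\ell$, where $\ell$ is the forward distance from $x_a$ to $x_b$ along $C$; since $C$ is a shortest odd cycle this is odd exactly when $\ell$ is even, forcing $\ell=2$ or $\ell$ odd (and $\ell\neq1$ by triangle-freeness). The same computation applied to genuine chords of $C$, together with triangle-freeness (which forbids chords at forward distance $2$, as these create a transitive triangle), controls how the in- and out-neighbourhoods of a vertex of $A$ sit on $C$. These facts are what let me keep bounded the set of indices $i$ for which $x_i$ is relevant to a fixed odd cycle $C'\subseteq A$.

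Next, given an odd directed cycle $C'\subseteq A$ avoiding every member of $\mathcal F$, I would run the minimal-index argument of Claim~\ref{clm:N+moinsN-_bounded}, but now watching both sides. Taking $i$ least such that $x_i$ has an out-neighbour in $C'$ and applying Lemma~\ref{lem:two_push} to $C'$ with $a=x_i$, I obtain three consecutive vertices forming an induced $\dP4$ $x_i\to p\to q\to r$ (the $C_4$ output being ruled out as in the other claims); prepending $x_{i-2}\to x_{i-1}\to x_i$ should give an induced $\dP6$. Among the prepended vertices the only arcs that can spoil inducedness are arcs \emph{from} $C'$ into $x_{i-1}$ or $x_{i-2}$ (forward chords between the $x$'s at distance $2$ being forbidden, and $x_{i-1},x_{i-2}$ having no out-neighbour in $C'$ by minimality of $i$). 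This is the \textbf{main obstacle}: unlike in Claim~\ref{clm:N+moinsN-_bounded}, where $C'\subseteq N^{+}(C)\setminus N^{-}(C)$ sends no arc back to $C$ and the extension is automatically clean, here every vertex of $C'$ does send an arc into $C$, so the $\dP6$ can be short-circuited by these backward arcs, i.e. by $x_{i-1}$ or $x_{i-2}$ being out-hitters of $C'$. Excluding exactly these possibilities by placing the corresponding sets $N^{-}(x_j)$ in $\mathcal F$, and using the localization above to confine the indices $j$ that can interfere to a bounded window around $i$, is what keeps $\mathcal F$ of bounded size; carrying out this bookkeeping carefully is where the constant $30$ is produced, and it is the step I expect to be delicate.

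For the second statement I would rerun the whole scheme with $N_2^{+}(x_i)$ and $N_2^{-}(x_i)$ in the roles of $N^{+}(x_i)$ and $N^{-}(x_i)$: these are stable sets \emph{precisely} by the added hypothesis, which is what makes them usable as colour classes, so the same count gives $\dic(N_2^{+}(C)\cap N_2^{-}(C))\le |\mathcal F|+2$. The only change in the $\dP6$-building step is that Claim~\ref{clm:basicsets}(\ref{it:4}) supplies a length-$2$ directed path from $C$ to each relevant vertex on both sides, so the induced $\dP4$ produced by Lemma~\ref{lem:two_push} is extended by two vertices toward $C$ rather than one; the position constraints and the interference analysis are identical in spirit and yield the same bound.
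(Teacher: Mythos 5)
There is a genuine gap, and it sits exactly where you flagged it yourself. Your whole scheme rests on exhibiting a \emph{fixed} family $\mathcal F$ of $28$ sets of the form $N^{\pm}(x_j)\cap A$ whose removal leaves $D[A]$ with no odd directed cycle, so that Lemma~\ref{lem:odd_cycle} finishes. But the sets you need to exclude are indexed by the minimal index $i$ attached to the offending cycle $C'$, and $i$ ranges over all $2k+1$ positions of $C$ as $C'$ varies; the length of $C$ is unbounded, so these sets cannot be collected into a family of bounded size unless you prove that the attachment indices of \emph{every} odd cycle in the remainder are confined to an \emph{absolute} bounded window of $C$. Your proposed ``localization'' only gives relative constraints: the minimality of $C$ (via the parity/length computations you sketch) constrains how the $in$/$out$ positions of \emph{adjacent} vertices of $C'$ relate, but it does not prevent these positions from drifting around the whole of $C$, so the window around $i$ is cycle-dependent and the bookkeeping you defer cannot close. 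Worse, the approach demands something strictly stronger than what is true in the paper's own analysis: after the paper deletes its $12$ stable sets $N^{\ell}(\{x_1,\dots,x_6\})$ (an absolute window, used only to avoid index wraparound, not to hit cycles), the remainder $X$ is \emph{not} shown to be odd-cycle-free --- it is only shown to have dichromatic number at most $18$, and its pieces $X_{i,\geq}$ are merely $2$-dicolourable, so odd directed cycles may well survive any bounded deletion of neighbourhoods. Your plan, which requires total destruction of odd cycles by $28$ such sets, therefore asks for more than the structure provides.

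The mechanism the paper actually uses is absent from your proposal: it defines functions $out(v)$ and $in(v)$ recording positions on $C$ reachable from $v$ by directed paths of length $\ell$, partitions $X$ into twelve classes by $out(v) \bmod 6$ and by the comparison $out(v)\geq in(v)$ versus $out(v)<in(v)$, and then bounds each class via Proposition~\ref{prop:noPk}: a backward directed path on $3$ (resp.\ $2$) vertices in the order given by $out$ would, routed through $C$ via the connecting vertices $p^{\pm}_v$, produce an odd closed trail strictly shorter than $C$, contradicting the choice of $C$ as a shortest odd directed cycle. This is a colouring-by-ordering argument driven by trail-parity arithmetic, not an induced-$\dP6$ argument; indeed, inside $A=N^{+}(C)\cap N^{-}(C)$ the Lemma~\ref{lem:two_push} route you borrow from Claim~\ref{clm:N+moinsN-_bounded} breaks down structurally (every vertex of $C'$ sends arcs back to $C$, so neither the $C_4$ outcome nor the backward interference can be excluded), which is precisely why the paper switches tools at this point. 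To repair your write-up you would have to abandon the hitting-set reduction to Lemma~\ref{lem:odd_cycle} and replace it with an ordering argument of the paper's type, for both $\ell=1$ and $\ell=2$.
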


\begin{subproof} 
The same proof works for the two assertions of the claim. Let $\ell \in \{1,2\}$ and observe that, by hypotheses (triangle-free for $\ell=1$, or the assumption of the second sentence for $\ell=2$), for every $x \in C$, both $N^{\ell +}(x)$ and $N^{\ell -}(x)$ are stable sets.

Let $X = (N^{\ell+}(C) \cap N^{\ell-}(C)) \setminus N^{\ell}(\{x_1, \dots, x_6\})$. It is enough to prove that $\dic(X) \leq 30-12= 18$. 


For each vertex $v  \in X$, choose (arbitrarily) a vertex $x_i$ (resp. $x_j$) in $C$ such that there is a directed path of length $l$ from $v$ to $x_i$ (resp. from $x_j$ to $v$). Set $out(v)=i$ and $in(v)=j$ so that we define two functions $out$ and $in$ from $X$ to $\{1,\ldots,2k+1\}$.

 In the case where $\ell = 2$, let  $p^{+}_{v}$ (resp. $p^{-}_{v}$) be a vertex such that $v \ra p^+_v \ra x_{out(v)}$ (resp. $x_{in(v)} \ra p^-_v \ra v$). In the rest of the proof, $v \ra p^+_v \ra x_{out(v)}$ is understood as $v \ra x_{out(v)}$ in the case where $\ell = 1$.

 For $i \in [0,5]$, let  $X_{i} = \{v \in X \mid out(v) = i\mod 6 \}$ and then define $X_{i,\geq} = \{v \in X_{i} \mid out(v) \geq in(v)\}$ and $X_{i,<} = \{v \in X_{i} \mid out(v) < in(v)\}$. It is enough to prove that $\dic(X_{i,\geq}) \leq 2$ and $\dic(X_{i,<})\leq 1$ for $i=0, \dots, 5$. 

So now $i$ is fixed and we define a total order $\prec$ on $X_i$ the  following way: we say first that $u \prec v$ when $out(u) < out(v)$ and then extend arbitrarily this partial ordering to a total ordering of $X_i$. 

We first prove that $\dic(X_{i,\geq}) \leq 2$ using Proposition~\ref{prop:noPk} applied to the reversal of $\prec$ defined above. Suppose then by contradiction that there exist $a,b,c \in X_{i,\geq}$ such that $a \prec b \prec c$ and $ab, bc \in A(D)$. Since $N^{\ell -}(x)$ is a stable set for every $x \in C$, $out(a)\neq out(b)$ and $out(b) \neq out(c)$ and thus
$$out(c) \geq 6 + out(b) \geq 12 +out(a) \geq 12 + in(a)$$
If $in(a)$ has the same parity as $out(a)$ (and thus as $out(b)$ and $out(c)$), then 
$x_{1} \ra x_2   \rightarrow \dots \rightarrow x_{in(a)} \rightarrow p^{-}_{a} \rightarrow a \rightarrow b \rightarrow c \rightarrow p^{+}_{c} \rightarrow x_{out(c)} \rightarrow \dots \rightarrow x_{2k+1} \rightarrow x_{1}$ 
is an odd  closed trail (it does need to be a directed cycle because $p^-_a = p^+_c$ is possible)  and otherwise, 
$x_{1} \ra x_2  \rightarrow \dots \rightarrow x_{in(a)} \rightarrow p^{-}_{a} \rightarrow a \rightarrow b \rightarrow p^{+}_{b} \rightarrow x_{out(b)} \rightarrow \dots \rightarrow x_{2k+1} \rightarrow x_{1}$ 
is an odd directed cycle. In both cases we get an odd directed trail that has strictly less vertices than $C$, and since an odd  closed trail contains an odd directed cycle, we get our contradiction. Thus $\dic(X_{i,\geq}) \leq 2$.

We now prove that $\dic(X_{i,<})\leq 1$. 
Suppose that there exist $a,b \in X_{i,<}$ such that $b \prec a$ and $ab \in A(D)$. 
Thus $out(b)+6 \leq out(a) < in(a) $. 
If $out(a)$ and $in(a)$ do not have the same parity, 
then $x_{out(a)} \rightarrow  x_{out(a)+1} \rightarrow \dots \rightarrow x_{in(a)} \rightarrow p^+_a \rightarrow a \rightarrow p^-_a \rightarrow x_{out(a)}$ is an odd  closed trail.  
Otherwise $out(a)$ and thus $out(b)$ have the same parity as $in(a)$, and then $x_{out(b)} \rightarrow  \dots \rightarrow x_{in(a)} \rightarrow p^-_a \rightarrow a \rightarrow b \rightarrow p^+_b \rightarrow x_{out(b)} $ is an odd directed cycle.  
In both cases it has strictly less vertices than $C$, a contradiction.  
Thus $\dic(X_{i,<}) \leq 1$ by Proposition~\ref{prop:noPk}. 
\end{subproof}

Let $\dC{3,2}$ be the digraph with vertices $u,v_1, v_2, w_1, w_2$ and arcs $uv_1, v_1v_2, v_2w_2, uw_1, w_1w_2$. Observe that if $G \in \F(\dC{3,2})$, then for every $x \in V(G)$, $N^{+}_2(x)$ and $N^{-}_2(x)$ are stable sets. Hence, by the previous claims (all of them), we get that for every triangle-free digraphs $G \in \F(\{\dP6, \dC{3,2}\})$,
the set $Q \cup N(Q) \cup N_{2}(Q) \cup N_{3}(Q)$, where $Q$ is an odd directed cycle of $G$ of minimum length, is dipolar and has dichromatic number at most $3 + 4 + 4+ 2 + 2 +1 + 1 + 30 + 30=77$. Hence, by Lemma~\ref{lem:dipolar} we get that:

\begin{claim}\label{clm:C32_bounded}
Triangle-free digraphs in $\F(\{\dP6, \dC{3,2}\})$ have dichromatic number at most $144$. 
\end{claim}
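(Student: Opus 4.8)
The plan is to exhibit in every member of the class a dipolar set of bounded dichromatic number and then invoke Lemma~\ref{lem:dipolar}, whose hypothesis is available since the class of triangle-free $\{\dP6,\dC{3,2}\}$-free digraphs is hereditary. First I would dispose of the trivial case: if $G$ contains no odd directed cycle, then $\dic(G)\le 2$ by Lemma~\ref{lem:odd_cycle} and we are done (here $V(G)$ is itself a dipolar set of dichromatic number at most $2$). So assume $G$ has an odd directed cycle and let $Q$ be one of minimum length. Set $S = Q \cup N(Q) \cup N_2(Q) \cup N_3(Q)$, which is exactly the set analysed in Claims~\ref{clm:Sdipo}--\ref{clm:hard} with $C$ replaced by $Q$ (all those claims apply verbatim, since $G$ is triangle-free and in $\F(\dP6)\supseteq\F(\{\dP6,\dC{3,2}\})$). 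By Claim~\ref{clm:Sdipo} the set $S$ is dipolar, so the whole task reduces to bounding $\dic(D[S])$.

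The one extra ingredient needed beyond the earlier claims is that forbidding $\dC{3,2}$ forces every second out-neighbourhood $N_2^+(x)$, and dually $N_2^-(x)$, to be a stable set. I would verify this directly: if $v_2\to w_2$ were an arc inside $N_2^+(x)$, pick $v_1,w_1\in N^+(x)$ with $x\to v_1\to v_2$ and $x\to w_1\to w_2$; triangle-freeness rules out every ``extra'' adjacency among $\{x,v_1,v_2,w_1,w_2\}$ (for instance $v_1=w_1$, or $v_1\sim w_2$, or $w_1\sim v_2$ would each create a triangle), so these five vertices induce precisely $\dC{3,2}$ with $u=x$, a contradiction. This is exactly the hypothesis required by the second assertion of Claim~\ref{clm:hard}, which therefore yields $\dic(N_2^+(Q)\cap N_2^-(Q))\le 30$.

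Next I would partition $S$ into the eight pairwise-disjoint pieces already controlled by the preceding claims and sum their dichromatic numbers, using subadditivity of $\dic$ over a vertex partition (assign disjoint colour palettes, so each monochromatic class lies inside a single piece and is acyclic there): $Q$ (at most $3$, Claim~\ref{clm:C_bounded}); $N^+(Q)\setminus N^-(Q)$ and $N^-(Q)\setminus N^+(Q)$ (at most $4$ each, Claim~\ref{clm:N+moinsN-_bounded}); $N^+(Q)\cap N^-(Q)$ (at most $30$, Claim~\ref{clm:hard}); $N_2^+(Q)\setminus N_2^-(Q)$ and $N_2^-(Q)\setminus N_2^+(Q)$ (at most $2$ each, Claim~\ref{clm:N2+moinsN2-_bounded}); $N_2^+(Q)\cap N_2^-(Q)$ (at most $30$, via the stability just established); and $N_3(Q)=N_3^+(Q)\cup N_3^-(Q)$, each summand an independent set by Claim~\ref{clm:Sdipo}, hence at most $2$ together. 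Summing gives $\dic(D[S])\le 77$. As $S$ is dipolar and the class is hereditary, Lemma~\ref{lem:dipolar} converts this into the claimed uniform bound on $\dic(G)$.

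Since every piece except $N_2^+(Q)\cap N_2^-(Q)$ is handed to us by the earlier claims, the genuine difficulty of this statement has in fact already been met inside Claim~\ref{clm:hard}; the only new conceptual content here is recognising that $\dC{3,2}$ is precisely the obstruction whose absence supplies the stable-second-neighbourhood hypothesis that Claim~\ref{clm:hard} requires. The remaining work is the bookkeeping of assembling the disjoint pieces and applying the dipolar lemma, which is routine.
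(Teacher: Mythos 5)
Your proposal is correct and follows the paper's own proof essentially verbatim: the same dipolar set $Q \cup N(Q) \cup N_{2}(Q) \cup N_{3}(Q)$, the same assembly of Claims~\ref{clm:Sdipo}--\ref{clm:hard} via the observation that forbidding $\dC{3,2}$ forces every $N^{+}_2(x)$ and $N^{-}_2(x)$ to be stable (which you verify in more detail than the paper's ``observe that''), and the same final appeal to Lemma~\ref{lem:dipolar}. One minor remark: like the paper, you bound the dipolar set by $77$, which via Lemma~\ref{lem:dipolar} yields $2\cdot 77=154$ rather than the stated $144$ --- an arithmetic slip present in the paper itself and harmless for the (unoptimised) final bound.
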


We are now able to prove the last bit of the proof. 

\begin{claim}\label{clm:N2+capN2-}
$\dic(N^{+}_2(C) \cap N^{-}_2(C)) \leq 144$.
\end{claim}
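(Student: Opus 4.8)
The plan is to deduce this claim from Claim~\ref{clm:C32_bounded}, applied to the induced subdigraph $D[W]$, where I write $W = N^{+}_2(C) \cap N^{-}_2(C)$. As an induced subdigraph of $D$, the digraph $D[W]$ is triangle-free and lies in $\F(\dP6)$, so it suffices to prove that $D[W]$ contains no induced $\dC{3,2}$: Claim~\ref{clm:C32_bounded} then yields $\dic(D[W]) \leq 144$ in one stroke. The whole work is thus concentrated in establishing $\dC{3,2}$-freeness of $D[W]$.

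So I would suppose, for contradiction, that $D[W]$ contains an induced $\dC{3,2}$ on vertices $u, v_1, v_2, w_1, w_2$ with arcs $u \ra v_1 \ra v_2 \ra w_2$ and $u \ra w_1 \ra w_2$; note that $u \ra v_1 \ra v_2 \ra w_2$ is then an induced $\dP4$. Since $u \in N^{+}_2(C)$, Claim~\ref{clm:basicsets} item~\ref{it:4} gives an in-path $a \ra p \ra u$ with $a \in C$ and $p \in N^{+}(C)$; since $w_2 \in N^{-}_2(C)$ it gives an out-path $w_2 \ra q \ra b$ with $b \in C$ and $q \in N^{-}(C)$. I would then examine the two directed walks on six vertices $a\,p\,u\,v_1\,v_2\,w_2$ and $u\,v_1\,v_2\,w_2\,q\,b$. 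Because there is no arc between $C$ and $N_2(C)$ (Claim~\ref{clm:basicsets} item~\ref{it:2}), the vertices $a$ and $b$ are non-adjacent to every vertex of $W$; because $D$ is triangle-free, $p$ is non-adjacent to $v_1$ (as $p \ra u \ra v_1$) and $q$ is non-adjacent to $v_2$ (as $v_2 \ra w_2 \ra q$); and the interior non-edges of the induced $\dC{3,2}$ kill the pairs $uv_2$, $uw_2$, $v_1w_2$. Hence the only chords that can spoil the first walk are $pv_2$ and $pw_2$, and the only ones spoiling the second are $uq$ and $v_1q$. If either walk is chord-free it is an induced $\dP6$, a contradiction.

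It remains to handle the case where a chord occurs on \emph{both} sides, that is, $p$ is adjacent to $v_2$ or to $w_2$ \emph{and} $q$ is adjacent to $u$ or to $v_1$; this is the heart of the matter. Here I would reroute, using the extra structure: the alternative branch $u \ra w_1 \ra w_2$, whose length differs by one from $u \ra v_1 \ra v_2 \ra w_2$ and so allows one to adjust the parity of a closed walk through $C$, together with the in- and out-paths attached to the interior vertices $v_1, v_2$, which also belong to $W$. Combining one such detour with the offending chord and with the appropriate sub-arc of $C$ should yield either a shorter induced $\dP6$ that avoids the bad pair, or an odd closed trail through $C$ of length strictly smaller than $|C|$; the latter contradicts the minimality of $C$, since an odd closed trail contains an odd directed cycle. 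The main obstacle is precisely this last case: the connector vertices $p$ and $q$ lie in $N(C)$, and their adjacencies both to the interior of the $\dC{3,2}$ and to the vertices of $C$ are difficult to control, so the rerouting has to be organised into a careful (though essentially routine) analysis of the four sub-cases, tracking arc directions via triangle-freeness at each step.
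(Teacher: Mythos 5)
Your overall strategy matches the paper's: apply Claim~\ref{clm:C32_bounded} to the induced subdigraph on $W=N^{+}_2(C)\cap N^{-}_2(C)$ (which inherits triangle-freeness and $\dP6$-freeness), so that everything reduces to showing $D[W]$ has no induced $\dC{3,2}$. Your chord bookkeeping for the two walks $a\,p\,u\,v_1\,v_2\,w_2$ and $u\,v_1\,v_2\,w_2\,q\,b$ is also correct (item~\ref{it:2} of Claim~\ref{clm:basicsets} kills all pairs involving $a$ or $b$, triangle-freeness kills $pv_1$ and $qv_2$, and inducedness of the $\dC{3,2}$ kills $uv_2$, $uw_2$, $v_1w_2$). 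But the proof is not complete: the case where \emph{both} walks carry a chord --- $p$ adjacent to $v_2$ or $w_2$, and $q$ adjacent to $u$ or $v_1$ --- is exactly the heart of the argument, and you defer it to an unspecified ``careful analysis of the four sub-cases'' with a speculative rerouting plan. Nothing you write shows those sub-cases actually close; moreover your sketch invokes odd closed trails shorter than $C$ and the minimality of $C$, which plays no role in this step of the paper and is a sign the intended finish has not been found. As it stands, this is a genuine gap, not a routine omission.

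The missing idea is to choose the connector path on the \emph{in-side} one step deeper into $C$ and to reuse it for \emph{both} branches of the $\dC{3,2}$. Since $u\in N^{+}_2(C)$, there exist $s\in C$ and $t\in N^{+}(C)$ with $s\ra t\ra u$, and one may take $r\in C$ to be the in-neighbour of $s$ on $C$. Then in $r\ra s\ra t\ra u\ra v_1\ra v_2$ the \emph{unique} possible chord is $tv_2$ (the pairs $rt$ and $tv_1$ are killed by triangle-freeness, all pairs between $\{r,s\}$ and $\{u,v_1,v_2\}$ by item~\ref{it:2}, and $uv_2$ by inducedness), and likewise in $r\ra s\ra t\ra u\ra w_1\ra w_2$ the unique possible chord is $tw_2$. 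Since $D$ is $\dP6$-free, both chords must be present, so $t$ is adjacent to both $v_2$ and $w_2$; as $v_2\ra w_2\in A(D)$, the set $\{t,v_2,w_2\}$ induces a triangle, a contradiction --- with no case analysis at all. Your asymmetric setup (one $C$-vertex on the in-side, the full branch $u\,v_1\,v_2\,w_2$, then a separate exit through $q$) leaves two possible chords per walk that never interact, which is precisely why you were left with four unresolved sub-cases; the symmetric prefix $r\,s\,t\,u$ makes the two forced chords meet at $t$ and collide with the arc $v_2w_2$.
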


\begin{subproof}
By Claim~\ref{clm:C32_bounded}, we may assume that $N^{+}_2(C) \cap N^{-}_2(C)$ contains  $\dC{3,2}$ as an induced subdigraph. 
Thus there exists $u,v_1,v_2,w_1,w_2 \in N^{+}_2(C) \cap N^{-}_2(C)$ such that $uv_1, uw_1, v_1v_2, w_1w_2, v_2w_2 \in A(D)$. 
Moreover, there exists $r,s \in C$, and $t \in N^+(C)$ such that $rs,st,tu \in A(D)$. Now, since $r \ra s \ra t \ra u \ra v_1 \ra v_2$ is not induced, $t$ and $v_2$ are adjacent, and since $r \ra s \ra t \ra u \ra w_1 \ra w_2$ is not induced, $t$ and $w_2$ are adjacent. Hence $t,v_2,w_2$ forms a triangle, a contradiction. 
\end{subproof}

Altogether, we get that $\dic(S) \leq 3 + 4 + 4 + 30 +2 + 2 + 144 +1 +1 = 191$, and thus $\dic(D) \leq 382$.

\subsubsection*{Acknowledgement}
This research was partially supported by the ANR project DAGDigDec (JCJC)   ANR-21-CE48-0012, by the
ANR project Digraphs
ANR-19-CE48-0013, and by the group Casino/ENS Chair on Algorithmics and Machine Learning.

\bibliography{refs}

\end{document}